\documentclass[preprint,10pt]{elsarticle}

\usepackage{amssymb}
\usepackage{verbatim}
\usepackage{amsfonts}
\usepackage{graphics}
\usepackage{amsmath}
\usepackage{times}
\usepackage{appendix}
\usepackage{graphicx}
\usepackage{color}
\usepackage{enumerate}
\usepackage{fancyhdr,latexsym,amsmath,amsfonts,amssymb,amsbsy,amsthm,url}
\usepackage[hscale=0.7,centering]{geometry}
\usepackage{graphicx,epsfig}
\usepackage{amssymb,amsmath,amsthm}
\usepackage{times}
\usepackage{color}
\usepackage{mathtools}
\usepackage{breqn}
\usepackage{subfigure}
\newtheorem{theorem}{Theorem}[section]

\newcounter{somecounter}
\setcounter{somecounter}{1}



\journal{Elsevier}

\begin{document}
  {\LARGE \bf  
\begin{center}
Attractors and long transients in a spatio-temporal \\ slow-fast Bazykin's model
\end{center}
}


\vspace*{1cm}

\centerline{\bf Pranali Roy Chowdhury$^1$, Sergei Petrovskii$^{2,3}$\footnote{Corresponding author.}, Vitaly Volpert$^{3,4,5}$, Malay Banerjee$^1$}

\vspace{0.5cm}

\centerline{ $^1$ Indian Institute of Technology Kanpur, Kanpur - 208016, India}

\centerline{ $^2$ School of Mathematics \& Actuarial Science, University of Leicester, Leicester LE1 7RH, UK}

\centerline{ $^3$ Peoples Friendship University of Russia (RUDN University), 6 Miklukho-Maklaya St,} 
\centerline{ Moscow 117198, Russian Federation}

\centerline{$^4$
Institut Camille Jordan, UMR 5208 CNRS, University Lyon 1, 69622 Villeurbanne, France}

\centerline{$^5$
INRIA
Team Dracula, INRIA Lyon La Doua, 69603 Villeurbanne, France}

\vspace{1cm}

\begin{center}
{\bf Abstract}
\end{center}

Spatio-temporal complexity of ecological dynamics has been a major focus of research for a few decades. Pattern formation, chaos, regime shifts and long transients are frequently observed in field data but specific factors and mechanisms responsible for the complex dynamics often remain obscure. An elementary building block of ecological population dynamics is a prey-predator system. In spite of its apparent simplicity, it has been demonstrated that a considerable part of ecological dynamical complexity may originate in this elementary system. A considerable progress in understanding of the prey-predator system’s potential complexity has been made over the last few years; however, there are yet many questions remaining. In this paper, we focus on the effect of intraspecific competition in the predator population. In mathematical terms, such competition can be described by an additional quadratic term in the equation for the predator population, hence resulting in the variant of prey-predator system that is often referred to as Bazykin’s model. We pay a particular attention to the case (often observed in real population communities) where the inherent prey and predator timescales are significantly different: the property known as a `slow-fast’ dynamics. Using an array of analytical methods along with numerical simulations, we provide comprehensive investigation into the spatio-temporal dynamics of this system. In doing that, we apply a novel approach to quantify the system solution by calculating its norm in two different metrics such as $C^0$ and $L^2$. We show that the slow-fast Bazykin’s system exhibits a rich spatio-temporal dynamics, including a variety of long exotic transient regimes that can last for hundreds and thousands of generations.

\vspace{1.0cm}

\noindent
{\bf Keywords:} Bazykin's model; slow-fast time scale; canard explosion; Turing instability; transients

\vspace{1cm}

\section{Introduction}
 
Mathematical modelling of long food chain and food webs is quite complex due to the multiple interaction among trophic levels. In this regard, the resource-consumer model (or prey-predator models) have received significant attention over the last few decades as such models are building blocks of long food chain and food webs. For two species prey-predator type interaction models, the predator population can be divided into two categories: specialist and generalist predator. Whenever the predator population has an alternative food source other than the local prey, then the predator is called generalist. Whereas, if the growth of the predator population depends solely on the prey abundance, then it is called specialist predator. Many field studies \cite{Schoener,Bourlot}, laboratory experiments \cite{Klomp64} have suggested that along with the prey-predator interaction, it is also important to recognize the competition for food among the individuals of same kind. This competition is density dependent and in general modelled by a quadratic mortality rate, termed as the intraspecific competition. In particular, for the specialist predator, it reduces their growth rate apart from the natural death. In 1974, Bazykin \cite{Bazykin} extended the classical Roseinzweig-MacArthur (RM) model by including intraspecific competition or self-limitation term in predator growth equation.  Interestingly, the inclusion of this negative feedback term induces additional complexity in the dynamics of the model which can capture some realistic aspects. The global dynamics of the Bazykin model and the effect of the intraspecific competition among predators in spatial pattern formation was discussed in \cite{EPL07}. In \cite{EPL05}, the intraspecific competition is considered among predators as well as super-predators in a three trophic level food chain. They have shown with the help of numerical simulations that with varying strength of intraspecific competition the system evolves from chaotic to periodic oscillation and, thus, it has a stabilizing effect on the dynamics of the system.
  
The species belonging to different trophic level have different growth rate which can differ by few orders of magnitude. In particular, the time needed for growth of individuals mostly increases along the food chain from bottom to top \cite{Muratori89}. For example, considering the interaction between hares and lynx, phytoplankton and zooplankton, insects and birds, etc., we observe that the resource compartment has much faster growth rate compared to the species belonging to higher trophic level. With this vision, researchers were trying to explore  prey-predator models by explicitly considering the difference in timescale. This gives a new perspective of understanding the ecological interaction and long nearly-periodical population fluctuation observed in nature. The temporal variations in population densities of species was studied with the help of geometric singular perturbation theory proposed by Neil Fenichel in his seminal work \cite{Fenichel}. This theory has been used to study some prey-predator models with different timescales \cite{Muratori89,Wang19,Rinaldi92,Poggiale20}. Rinaldi and Muratori \cite{Muratori91} developed a separation principle to understand the existence of slow-fast cycles and have analyzed the periodic bursting of high and low-frequency oscillations in interacting population models with two and three-trophic levels. In this work we rescale the Bazykin model into singularly perturbed slow-fast system using a small dimensionless timescale parameter $\varepsilon$ introduced in predator growth. Apart from the stable and unstable Hopf bifurcating limit cycle, we will show that the slow-fast Bazykin model exhibits additional dynamical behavior. We further give a complete bifurcation structure to understand the bifurcation of different periodic orbits, in particular canard cycles and relaxation oscillations \cite{Krupa01b}.
  
Non-spatial models of interacting populations assume that all the individuals are distributed homogeneously over space, that is, within their habitat. But in reality, the individuals are usually distributed heterogeneously. Reaction-diffusion equations \cite{Cantrell03} provide an appropriate framework to study the effect of species heterogeneity in persistence or extinction of interacting species. It incorporates the population gradient of the species, the rate of dispersal and the interaction among other species in their natural habitat. One of the major phenomenon captured by reaction-diffusion models is pattern formation due to self-organization. The diffusion driven instability (often referred to as the Turing instability) is a homogeneity breaking mechanism which leads to the formation of various spatial patterns that are ubiquitous in nature \cite{Turing52}. It is well known that the Bazykin model satisfies Turing instability condition and hence supports the formation of stationary pattern for suitable range of parameters \cite{Avila17}.
  In the slow-fast context, the Bazykin type prey-predator model can be considered as a perturbed version of the Rosenzweig-MacArthur model. Hence, it preserves the same dynamic patterns for a reasonable range of parameters, especially beyond the Hopf-bifurcation threshold. Here we will show that the slow-fast Bazykin model exhibits Turing pattern, and the Turing threshold along with the unstable eigenmode changes with varying $\varepsilon.$ We will provide a comparative study as to how the
solution of the spatial and non-spatial system behaves in the presence or absence of slow-fast timescale. We will also show that the spatial average of the solution of the corresponding spatial model also exhibits spiking behavior forming spatio-temporal canard cycles \cite{Avitabile17}. 

Further, we will discuss the different transient dynamics observed in the spatial model before reaching the final dynamics. Apart from the long-term behavior of the ecological models, the spatial model also shows intriguing transient dynamics. The `final' asymptotic dynamics of the model might take a very long time, covering a hundred and thousands of generations of the species. Therefore, it is essential to focus on the different transient behavior of the model to study how the dynamics of the system change over long timescales \cite{Hastings04,Hastings18,Hastings94}. 

The article is organized as follows. The non-spatial model is described with and without slow-fast timescale along with a complete bifurcation structure in Section 2. Here we also give schematic representation of how the bifurcation curves and the canard and relaxation oscillation curves divide the two parametric domain. In Section 3 we provide an existence criteria and bounds for the corresponding spatial model. The steady state analysis of the system, Turing instability and the properties of the emerging Turing pattern are discussed as well. The nature of the transient and their duration are explored in Section 4. We finally give the conclusion of our work in Section 5.

 \section{Temporal model with slow-fast dynamics}
	
In this work, we consider a Bazykin type model, i.e.~a prey-predator model obtained under the following assumptions. The interaction among the species is modeled by Holling type II functional response. An intra-specific competition between the predators is taken into account by a quadratic term. Furthermore, we assume that the growth rate of prey population is significantly larger than that of the predator population. In appropriately chosen dimensionless variables, the model with a difference in timescale of the growth of the interacting species is given by the following system of equations:
	\begin{equation} \label{eq:temp_fast_sys}
	\begin{aligned}
	\frac{du}{dt} &= \nu u\Big(1-\frac{u}{\chi}\Big) -\frac{\beta uv}{1+\alpha u}:=f(u,v),\\
	\frac{dv}{dt} &= \varepsilon\Big(\frac{\beta u v}{1+\alpha u}- \eta v - \delta v^2\Big):=\varepsilon g(u,v),
	\end{aligned}
	\end{equation}
	where $\nu$ is the intrinsic growth rate of the prey, $\chi$ is the prey carrying capacity, $\beta$ represents the prey predator interaction, $\alpha$ is the amount of prey by which the predation effect is maximum, $\eta$ is the per capita death rate of predators, $\delta$ is predator
     death rate caused due to intraspecific competition and $\varepsilon$ is the timescale parameter. We assume that parameter $\varepsilon$ is small, $0<\varepsilon\ll 1$, that is the dynamics of prey is much faster than that of predator. The variables $u$ and $v$ denotes the dimensionless prey and predator density at time $t.$
     
     Depending on the parameter values, the model can admit at most five equilibrium points (cf.\cite{EPL07} for details). Throughout our paper, we choose the parametric domain such that system (\ref{eq:temp_fast_sys}) has unique equilibrium point in the first quadrant. The extinction and prey only equilibrium points of the system (\ref{eq:temp_fast_sys}) are given by $E_0=(0,0),$ and $E_{\chi}=(\chi,0)$ respectively.  The coexistence equilibrium points $(u_*,v_*)$ are such that $u_*$ is the root of the cubic equation   
	\begin{equation}
	  \alpha^2\delta\nu u^3+ \delta \nu \alpha(2-\alpha \chi) u^2+ (\beta^2 \chi -\eta \alpha \beta \chi -2\delta\nu\chi\alpha+\delta\nu)u-\chi(\delta\nu+\beta\eta) = 0,  
	\end{equation} and
	\begin{equation*}
	    v_* = \frac{1}{\delta}\Big(\frac{\beta u_* }{1+\alpha u_*}- \eta\Big).
	\end{equation*}
We set 
  \begin{equation*}
    A:=\delta^2 \nu^2\alpha ^2 (2-\alpha \chi)^2-3\alpha^2\delta\nu (\beta^2 \chi -\eta \alpha \beta \chi -2\delta\nu\chi\alpha+\delta\nu),
    \end{equation*}
   \begin{equation*}
        B:=2\delta^3\nu^3 \alpha^3(2-\alpha \chi)^3-9\alpha^3\delta^2\nu^2(2-\alpha \chi)(\beta^2 \chi -\eta \alpha \beta \chi -2\delta\nu\chi\alpha+\delta\nu)+27\alpha^4\delta^2\nu^2\chi(\delta\nu+\beta\eta)
  \end{equation*}
  and consider $\Delta:=B^2-4A^3.$ From the analysis of the number of roots in a cubic equation \cite{Wang19} we obtain that if $\Delta>0$ and $\beta u_*>\eta(1+\alpha u_*)$ then the system has a unique feasible coexistence equilibrium $E_*$. Evaluating the Jacobian matrix at $E_0$ we infer that $E_0$ is a saddle and at $E_{\chi}$, the Jacobian matrix takes the form 
	\begin{equation*}
	J_{E_{\chi}}=
	\begin{pmatrix}
	-\nu&\frac{-\beta \chi}{1+\alpha \chi}\\0&\varepsilon\Big(\frac{\beta \chi}{1+\alpha \chi}-\eta\Big)
	\end{pmatrix}.
	\end{equation*}
	Thus, $E_{\chi}$ is stable if $\frac{\beta \chi}{1+\alpha \chi}<\eta$ and saddle if $\frac{\beta \chi}{1+\alpha \chi}>\eta.$ The coexistence equilibrium point cannot be obtained explicitly, so we numerically study the stability of $E_*$. We linearize the system around the equilibrium point $E_*(u_*,v_*)$ and obtain the Jacobian matrix as follows
	\begin{equation}\label{jacobstar}
	J_{E_{*}}=
	\begin{pmatrix}
	\nu\Big(1-\frac{2u_*}{\chi}\Big)-\frac{\beta v_*}{(1+\alpha u_*)^2}&\frac{-\beta u_*}{1+\alpha u_*}\\\frac{\varepsilon \beta v_*}{(1+\alpha u_*)^2}&\varepsilon\Big(\frac{\beta u_*}{1+\alpha u_*}-\eta-2\delta v_*\Big)
	\end{pmatrix}\equiv \begin{pmatrix}
	a_{11} & a_{12} \\
	a_{21} & a_{22} \\
	\end{pmatrix}.
	\end{equation}
 Considering $\chi$ as the bifurcation parameter, we have Trace($J_{E_*})=0$ and Det($J_{E_*})>0$ at the Hopf threshold $\chi=\chi_H.$ The Hopf bifurcation curve is shown in Fig.~\ref{fig:bifurcation_Hopf} in $\delta-\chi$ plane for $\nu=10,\ \alpha=1,\ \beta=2.85,\ \eta=1,\ $ and $\varepsilon=1$ fixed. Along this bifurcation curve, the system encounters a generalized Hopf bifurcation point (GH point) where the first Lyapunov number $l_1$ for Hopf bifurcation vanishes. The lower branch of the Hopf curve is super-critical where $l_1<0$ and the upper branch is sub-critical where $l_1>0$ and this transition takes place at GH point where $l_1=0.$ Thus, for a fixed $\delta$, as we move along the $\chi-$axis we observe different dynamics of the system (\ref{eq:temp_fast_sys}). For $\delta=0.11$ and $\chi=3.5,$ the coexistence equilibrium point $E_*$ is globally stable, that is all the trajectories approach this point. $E_*$ loses its stability as $\chi$ crosses the lower branch of Hopf curve. Inside the parabolic region, $E_*$ is unstable. It is surrounded by a stable limit cycle, and with increasing value of $\chi$ the size of the stable limit cycle increases. This is shown in the Fig.~\ref{fig:bifurcation_Hopf} for $\delta=0.11,$ and $\chi=4.5$ as marked by blue dot. On crossing the upper branch of Hopf curve, $E_*$ becomes stable, surrounded by an unstable limit cycle which again is surrounded by a stable limit cycle. The unstable limit cycle acts as a separatrix between the basin of attraction of the interior stable equilibrium and the outermost stable limit cycle. In a very small parametric domain near the Hopf threshold, the size of the unstable limit cycle increases whereas the size of the stable limit cycle decreases. This is verified by taking $\delta=0.11,$ and $\chi=12.25$ as shown in Fig.~\ref{fig:bifurcation_Hopf}. These two cycles coalesce and disappear at the saddle-node bifurcation point of limit cycles, for $\chi_{SN}=12.2522993$. The broken line in the Fig.~\ref{fig:bifurcation_Hopf} represents the curve of saddle-node bifurcation of limit cycle in $\delta-\chi$ plane.  
	
	\begin{figure}[ht!]
	    \centering
	    \includegraphics[width=12cm,height=7cm]{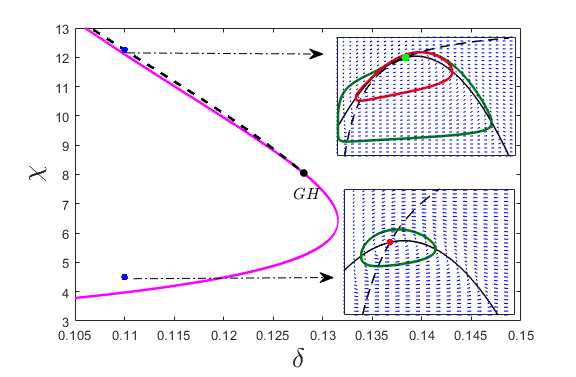}
	    \caption{Two-parametric bifurcation diagram of system (\ref{eq:temp_fast_sys}) for $\nu=10,\ \alpha=1,\ \beta=2.85, \eta=1$ and $\varepsilon=1$. The solid magenta curve represents the Hopf curve, GH (black dot) is the generalized Hopf bifurcation point on the Hopf curve. The broken line represents the saddle-node bifurcation curve of limit cycles. The local dynamics of the system is shown in the inset for two different points in $\delta-\chi$ parameter plane as mentioned in the text. The non-trivial prey and predator nullcline are shown in the black solid curve and the black broken curve. The stable and unstable attractors are marked by green and red colour. That is, the stable limit cycle (equilibrium) is represented by the green curve (dot), whereas the unstable limit cycle (equilibrium) is represented by the red curve (dot).}
	    \label{fig:bifurcation_Hopf}
	\end{figure}
	
The aim of this paper is to further investigate how the system behaves for $0<\varepsilon\ll 1$. To study the dynamics of the slow-fast system (\ref{eq:temp_fast_sys}), for sufficiently small values of $\varepsilon$ we re-write the system in terms of slow time $\tau$ as $\tau:=\varepsilon t$, where $0<\varepsilon\ll 1$ and $t$ is the fast time
\begin{equation} \label{eq:temp_slow_sys}
	\begin{aligned}
	\varepsilon\frac{du}{d\tau} &= \nu u\Big(1-\frac{u}{\chi}\Big) -\frac{\beta uv}{1+\alpha u}:= f(u,v),\\
	\frac{dv}{d\tau} &= \Big(\frac{\beta u v}{1+\alpha u}- \eta v - \delta v^2\Big):= g(u,v).
	\end{aligned}
\end{equation}
For $\varepsilon\rightarrow 0$ in system (\ref{eq:temp_fast_sys}) we obtain the corresponding fast subsystem (layer system) as follows
\begin{equation} \label{eq:fast_subsystem}
	\begin{aligned}
	\frac{du}{dt} &= f(u,v)=\nu u\Big(1-\frac{u}{\chi}\Big) -\frac{\beta uv}{1+\alpha u},\\
	\frac{dv}{dt} &= 0,
	\end{aligned}
\end{equation}
and taking $\varepsilon\rightarrow0$ in system (\ref{eq:temp_slow_sys}) we obtain a differential-algebraic system of equations (or slow subsystem) as follows
\begin{equation}\label{eq:DAE}
	\begin{aligned}
	0 &= f(u,v)= \nu u\Big(1-\frac{u}{\chi}\Big) -\frac{\beta uv}{1+\alpha u},\\
	\frac{dv}{d\tau} &= g(u,v)= \Big(\frac{\beta u v}{1+\alpha u}- \eta v - \delta v^2\Big).
	\end{aligned}
\end{equation}
The solution of system (\ref{eq:DAE}) is constrained to the set $\{(u,v) \in \mathbb{R}^2_+:f(u,v)=0\}$ known as critical manifold $C_0$, which can be written as $C_0 = C_0^0 \cup C_0^1$ where $C_0^0 = \{(u,v): u=0, v\ge 0\}$ and $C_0^1= \{(u,v): v=\frac{\nu}{\beta} \Big(1-\frac{u}{\chi}\Big)(1+\alpha u):=F(u) \}.$ The fold point $(u_f,v_f)$ of the critical manifold $C_0^1$ satisfies the following conditions 
$$ F(u_f)=0,\ F'(u_f)=0,\ F''(u_f)\ne0.$$ Furthermore, we assume that $g_u(u,v)\ne0$ such that the nullcline $g(u,v)$ transversally intersects the critical manifold $C_0^1.$ From the above conditions the coordinates of the fold point can be obtained explicitly as $u_f=\frac{1}{2\alpha}(\alpha\chi-1)$, and $v_f=F(u_f)$. This point divides the critical manifold $C_0^1$ into attracting sub-manifold $C^{1,a}_0=\{(u,v): (u,v)\in C^1_0, F'(u)<0 \}$, and repelling sub-manifold $C^{1,r}_0=\{(u,v): (u,v)\in C^1_0, F'(u)>0 \}$.
The slow flow on either branch of the critical manifold $C^1_0$, i.e $v=F(u)$ is given by (\ref{eq:DAE}). Since $v=F(u),$ then $	\frac{dv}{d\tau} = F'(u)\frac{du}{d\tau}$ and from (\ref{eq:DAE}) we have
	\begin{equation}\label{eq:slow-flow}
	\begin{aligned}
	\frac{du}{d\tau}=\frac{g(u,F(u))}{F'(u)}.
	\end{aligned}
	\end{equation}
	At the fold point we have $F'(u)=0$, therefore the flow in the neighborhood of the fold point depends on whether $g(u,F(u))=0$ or $g(u,F(u))\ne 0$ as follows:\\
	(a) If $g(u,F(u))\ne0$ at $u=u_f$, then (\ref{eq:slow-flow})
	is singular at $(u_f,v_f)$ and in this case the fold point is called jump point. The trajectory of the system (\ref{eq:temp_fast_sys}) for $\varepsilon>0$ follows the attracting slow sub-manifold closely, passes through the vicinity of the fold point and then jumps to the trivial slow manifold $(C_0^0)$ through fast horizontal flow. \\
	(b) If $g(u,F(u))=0$ at the fold point $(u_f,v_f),$ then $\delta_f = \frac{1}{v_f}\Big(\frac{\beta u_f}{1+\alpha u_f}-\eta\Big).$ Thus, 
	\begin{equation*}
	\begin{aligned}
	\frac{du}{d\tau} &= \frac{g(u,F(u))}{F'(u)}\\
	&= \frac{g(u_f,F(u_f))+(u-u_f)g_u(u_f,F(u_f)+O(u-u_f)^2}{(u-u_f)F''(u)+O(u-u_f)^2}\\
	& = \frac{g_u(u_f,F(u_f))+O(u-u_f)}{F''(u)+O(u-u_f)}.
	\end{aligned}
	\end{equation*}
	Since $g_u\ne 0$ and $F''(u)\ne 0$ at the fold point, therefore, for $\varepsilon\ll1$, the slow flow is defined at this point. The trajectory crosses the fold point and stays near the repelling sub-manifold for a certain time before jumping to the trivial manifold. In this case the fold point is called canard point and the solution of system (\ref{eq:temp_fast_sys}) passing through this point is called canard solution.
	
The critical manifold $C_0$ and the fold point on $C_0$ is independent of the time scale parameter $\varepsilon$. The coexistence equilibrium point $E_*$ of the system (\ref{eq:temp_fast_sys}) is independent of $\varepsilon$, but the stability of $E_*$ depends on $\varepsilon$. Since the explicit expression of the equilibrium point and the Hopf bifurcation threshold cannot be calculated analytically, so we numerically find them. In the $\delta-\chi$ parametric plane the Hopf curve lies at a $O(\varepsilon)$ distance from the curve of canard point, which will be called as fold curve throughout our paper to avoid ambiguity. Thus, for a fixed value of $\chi$, $\delta_H\rightarrow \delta_f$ as $\varepsilon\rightarrow 0$. This is represented in Fig.~\ref{fig:bifurcation_schematic}(a) where the Hopf curves for three values of $\varepsilon$ are shown along with the fold curve.
	
\begin{figure}[ht!]
	    \centering
	    \mbox{\subfigure[]{\includegraphics[width=8cm,height=6cm]{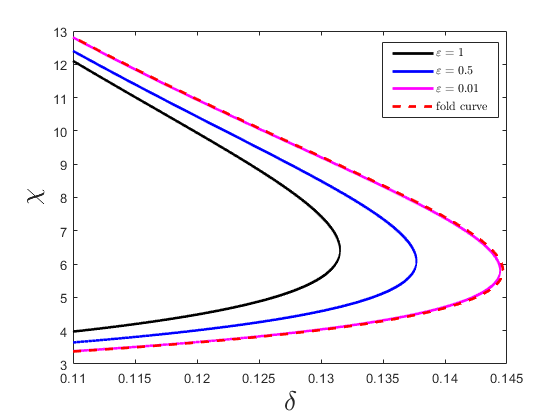}}
	    \subfigure[]{\includegraphics[width=8cm,height=6cm]{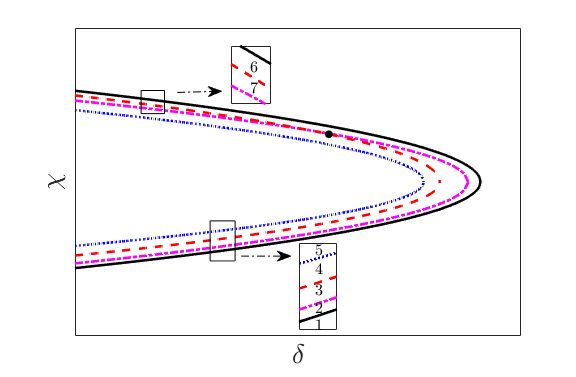}}
	    }
	    \caption{(a) The Hopf bifurcation curves for $\nu=10,\ \alpha=1,\ \beta=2.85,\ \eta=1$ and $\varepsilon=1$ (black), $\varepsilon=0.5$ (blue), $\varepsilon=0.01$ (magenta) and the curve of canard point in $\delta-\chi$ plane; (b) The schematic bifurcation diagram for $\varepsilon=0.01$ and keeping all the parameters fixed as in (a) where Hopf bifurcation curve (magenta), curve of canard point (black), the maximal canard curve (red) and the relaxation oscillation curve (blue) is shown. The curves divide the parameter plane into seven domains with different properties, see details in the text.}
	    \label{fig:bifurcation_schematic}
	\end{figure}
The intriguing slow-fast dynamics of system (\ref{eq:temp_fast_sys}) for $\varepsilon \ll 1$ can be seen in extremely narrow parametric regimes. For that we fix $\varepsilon=0.01$ and numerically obtain the maximal canard curve and the relaxation oscillation curve in $\delta-\chi$ plane. These curves along with the fold curve and Hopf bifurcation curve divides the parametric plane into seven narrow domains where different dynamics of the system are observed. To represent this we provide a schematic diagram in Fig.~\ref{fig:bifurcation_schematic}(b). The fold curve is denoted by a solid line (black) and near to this is the Hopf bifurcation curve (broken magenta curve). The maximal canard curve shown (broken red line) transversely cuts the Hopf curve at the GH point (black dot) and passes closely to the fold curve for higher values of $\chi.$ Finally, the blue dotted line is the curve of relaxation oscillation cycle which lies entirely in the parabolic region. 
	
The entire domain outside the parabolic fold curve is denoted by Domain 1, and the entire domain enclosed by the relaxation oscillation curve is denoted by Domain 5. In Domain 1, the system has unique coexistence equilibrium point $E_*(u_*,v_*)$ such that $u_*>u_f$. The equilibrium point lies on the normally hyperbolic attracting sub-manifold of the critical manifold $C_0^1$, and $E_*(u_*,v_*)$ is globally asymptotically stable. In Domain 2, we have $u_H \approx u_f$. From the stability analysis we obtain that the eigenvalues are complex conjugate with negative real part of the order of $10^{-4}$. Thus, the trajectory converges to the coexistence equilibrium point with extremely slow rate of convergence.  Also, for $\varepsilon$ sufficiently small the Hopf bifurcation curve coincides with the fold curve. Keeping all the parameters fixed as in Fig.~\ref{fig:bifurcation_schematic}(a), we take values from Domain 2 such that $\chi=4.28,\ \delta=0.1347.$  Then $u_H = 1.637,\ u_f = 1.64$ and the eigenvalues evaluated at $E_*$ are $-9.5\times 10^{-5}\pm 0.2i.$ Domain 3 is enclosed by the super-critical Hopf bifurcation branch, the maximal canard curve and the GH point, whereas, Domain 7 is enclosed by the sub-critical Hopf bifurcation branch, the maximal canard curve and the GH point. The distinctive difference between these two domains is the number of slow-fast cycles present and the stability of these cycles. The Hopf bifurcation is necessary for the existence of canard cycles (with and without head). We will discuss about the stability of these cycles but before that let us describe all the possible slow-fast cycles of system (\ref{eq:temp_fast_sys}).  
	
Let $s^*=v_f-v_{ext}$, where $v_{ext}$ is the v-coordinate of the point of exit of the slow trajectory from the critical manifold $C^0_0$ and $v_f$ is the v-coordinate of the fold point. Using similar approach as discussed in \cite{Chowdhury21} we can find $v_{ext}$. We define the continuous family of singular slow-fast cycles $\Gamma(s)$ for $s\in [0,s^*]$. Let $u_l(s)<u_r(s)$ are the two distinct roots of $F(u) = v_{ext}+s$. We define $u_r$ such that $F(u_{r}) = v_{ext}$ and $u_l(0)=u_l$, then  \\
	
\noindent (i) the cycles $\Gamma(s) = \{(u,F(u)): u\in[u_l(s),u_r(s)]\}\cup \{(u,v_{ext}+s):u\in[u_l(s),u_r(s)]\},$ for  $s\in (0,s^*) $ corresponds to canard without head,\\
	
\noindent (ii) the cycles $\Gamma(s) = \{(u,F(u)): u\in[u_l(s),u_r]\}\cup \{(u,v_{ext}+s):u\in[u_l,u_l(s)]\}\cup \{(u_l,v):v\in[v_{ext}+s,v_{ext}] \} \cup \{(u,v_{ext}):u\in[u_l,u_r]\},$ for  $s\in (0,s^*) $ corresponds to canard with head, and\\
	
\noindent (iii) the cycles $\Gamma(s) = \{(u,F(u)): u\in[u_f,u_r]\}\cup \{(u,v_f):u\in[u_l,u_f]\}\cup \{(u_l,v):v\in[v_{ext},v_f]\}\cup \{(u,v_{ext}):u\in[u_l,u_r]\},$ for  $s\in (0,s^*) $ corresponds to relaxation oscillation cycle. 

The cycles of Type (i) and (ii) are illustrated in Fig.~\ref{fig:canard_schematic}.
	\begin{figure}[ht!]
	    \centering
	    \mbox{\includegraphics[scale=0.6]{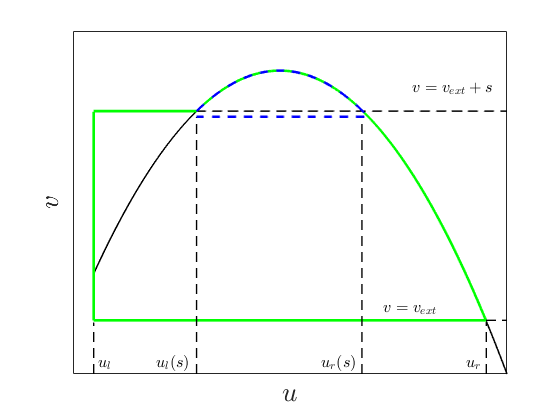}}
	    \caption{Schematic representation of the singular slow-fast cycles in the system  where Type (i) cycle is marked with blue (broken line), and Type (ii) cycle is marked with green. The canard cycle (with and without head) obtained for the system (\ref{eq:temp_fast_sys}) are perturbation of the singular cycles.}
	    \label{fig:canard_schematic}
	\end{figure}
	
	To explain the notion of stability of the canard cycles we follow \cite{Maesschalck15,Wang19} to define slow divergence integral. The integrand is given by the divergence of the vector field along the critical (slow) manifold. For the cycles of type (i) we define the slow divergence integral as 
	\begin{equation*}
	    I(s) =  \int_{u_r(s)}^{u_l(s)} \frac{\partial f}{\partial u}(u,F(u))\frac{F'(u)}{g(u,F(u))} \,du 
	\end{equation*}
and for the cycles of type (ii) we set
   \begin{equation*}
	    \Tilde{I}(s) =  \int_{u_r}^{u_l(s)} \frac{\partial f}{\partial u}(u,F(u))\frac{F'(u)}{g(u,F(u))} \,du + \int_{v_{ext}+s}^{v_{ext}} \frac{\frac{\partial f}{\partial u}(0,v)}{g(0,v)}  \,dv. 
	\end{equation*}
In order to proceed further, we recall the following theorem (see \cite{Maesschalck15,Wang19} and references therein).	
\begin{theorem}\cite{Maesschalck15,Wang19}
    For sufficiently small $\varepsilon$, the stability of the perturbed canard cycles depends on the sign of the slow divergence integral. If $I(s)<0 \ (or >0),$ then the canard cycle without head is stable (or unstable) and if $\Tilde{I}(s)<0 \ (or >0),$ then the canard cycle with head is stable (or unstable).
\end{theorem}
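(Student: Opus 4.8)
The statement is a classical fact in the theory of planar slow--fast cycles, and the plan is to reproduce the argument of \cite{Maesschalck15} in the setting of system (\ref{eq:temp_fast_sys}). The starting point is the standard characterisation of hyperbolic stability of a limit cycle through its Poincar\'e return map: if $P_\varepsilon$ is the return map of (\ref{eq:temp_fast_sys}) on a transversal to a limit cycle $\Gamma_\varepsilon$, then $\Gamma_\varepsilon$ is stable when $P_\varepsilon'<1$ and unstable when $P_\varepsilon'>1$, and by Liouville's formula $\log P_\varepsilon'=\oint_{\Gamma_\varepsilon}\mathrm{div}\,X_\varepsilon\,dt$, where $X_\varepsilon$ is the vector field of (\ref{eq:temp_fast_sys}) and $\mathrm{div}\,X_\varepsilon=f_u+\varepsilon g_v$. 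Thus the whole problem reduces to locating $\Gamma_\varepsilon$ and estimating this divergence integral to leading order in $\varepsilon$.

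First I would recall, from Fenichel theory together with the blow-up analysis of the (canard) fold point, that for $\varepsilon$ small and $\chi$ in the appropriate exponentially thin window there is a hyperbolic limit cycle $\Gamma_\varepsilon$ lying $O(\varepsilon^{2/3})$-close to each singular cycle $\Gamma(s)$: along the normally hyperbolic branches it shadows the Fenichel perturbations $C^{1,a}_\varepsilon$, $C^{1,r}_\varepsilon$ of $C^{1,a}_0$, $C^{1,r}_0$ (and, for the cycles of type (ii), the perturbation of the trivial branch $C^0_0$), and it crosses the fold region inside a tube of width $O(\varepsilon^{2/3})$.

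Next I would split $\Gamma_\varepsilon$ into (a) the slow passages along $C^{1,a}_\varepsilon$, $C^{1,r}_\varepsilon$ and, for cycles with head, along the perturbed trivial branch; (b) the fast horizontal jumps; and (c) the passage through the blown-up fold region, and estimate each contribution to $\oint_{\Gamma_\varepsilon}\mathrm{div}\,X_\varepsilon\,dt$. On a slow branch one passes to slow time $\tau=\varepsilon t$: there $f_u=O(1)$ is exactly the nontrivial eigenvalue of the layer problem (\ref{eq:fast_subsystem}) and $dt=d\tau/\varepsilon$, so the contribution is $\frac{1}{\varepsilon}\int f_u(u,F(u))\,d\tau+O(1)$; using the reduced flow (\ref{eq:slow-flow}), i.e. $d\tau=F'(u)\,du/g(u,F(u))$, this becomes $\frac{1}{\varepsilon}\int f_u(u,F(u))\frac{F'(u)}{g(u,F(u))}\,du$, and on the trivial branch, where $dv/d\tau=g(0,v)$, the analogous term is $\frac{1}{\varepsilon}\int \frac{f_u(0,v)}{g(0,v)}\,dv$. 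On the fast jumps $v$ is constant to leading order and $dv/dt=O(\varepsilon)$, so their contribution is merely $O(1)$; the essential point is that the fold passage also contributes only $O(1)$, and here the genericity hypotheses $g_u\ne0$ and $F''\ne0$ at the fold (which keep the fold/canard passage regular after blow-up) are exactly what is needed. Summing, $\log P_\varepsilon'=\frac{1}{\varepsilon}I(s)+O(1)$ for cycles of type (i) and $\frac{1}{\varepsilon}\tilde{I}(s)+O(1)$ for cycles of type (ii), with $I$ and $\tilde{I}$ precisely the integrals in the statement, the orientation of the limits of integration recording the direction of traversal along the attracting versus the repelling branch. Hence, for $\varepsilon$ sufficiently small, the sign of $I(s)$ (resp.\ $\tilde{I}(s)$) determines whether $P_\varepsilon'<1$ or $P_\varepsilon'>1$, which is the assertion.

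The main obstacle is step (c): proving that the passage through the fold point contributes only a bounded amount and does not corrupt the $\frac{1}{\varepsilon}I(s)$ leading term. This requires the Krupa--Szmolyan blow-up of the canard fold point and a careful matching of $C^{1,a}_\varepsilon$ and $C^{1,r}_\varepsilon$ through it; one must also control the exponentially small splitting between these two manifolds that produces the canard explosion and hence guarantees the existence of the very cycle whose stability is in question. These are exactly the technical inputs that are quoted from \cite{Maesschalck15,Wang19}; what remains to be done locally is the bookkeeping that recasts the slow contributions as the stated $u$- and $v$-integrals.
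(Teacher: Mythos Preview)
The paper does not supply its own proof of this theorem: it is explicitly recalled from \cite{Maesschalck15,Wang19} and used as a black box, so there is no in-paper argument to compare against. Your proposal is a faithful outline of the classical proof in those references---Liouville's formula for $\log P_\varepsilon'$, Fenichel/blow-up localisation of $\Gamma_\varepsilon$, and the splitting into slow, fast, and fold contributions---and it correctly identifies the fold passage as the only nontrivial technical step. One minor sharpening: the fast-layer and fold contributions are typically shown to be $o(1/\varepsilon)$ (in fact $O(\log\varepsilon)$ after matching the logarithmic endpoint singularities of $\int f_u/f\,du$ with the slow integrals) rather than $O(1)$, but this does not affect the conclusion since $I(s)/\varepsilon$ still dominates.
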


Let $\Gamma(s)$ be the slow-fast cycle of type (i), that is canard cycle without head and let $T$ be the point of intersection of $C_0^0$ and $C_0^1$ having the coordinate $\big(0,\frac{\chi}{\beta}\big)$. Then for any $v  \in \big(\frac{\chi}{\beta},v_f\big)$, where $v=v_{ext}+s$ the slow divergence integral $I(s)$ is defined along the critical manifold $v=F(u)$. The two distinct feasible roots of $F(u)=v_{ext}+s$ are given by $$u_r(s),\ u_l(s)= \frac{1}{2}\big(\chi-\frac{1}{\alpha}\big)\pm \frac{1}{2\alpha\nu}\sqrt{\nu^2(\alpha\chi+1)^2-4\alpha\beta\nu\chi(v_{ext}+s)}.$$
Now, depending on whether $I(s)<0$ or $>0$, the canard cycle without head is either stable or unstable. Whereas, if $\Gamma(s)$ is the canard cycle of type (ii) then the integral is calculated by dividing into two parts, one along the critical manifold $C^1_0$ and another one along $C_0^0.$

In Domain 3, the coexistence equilibrium point loses its stability via super-critical Hopf bifurcation, and the family of canard cycles without head $\Gamma(s)$, as defined above, are attracting for $\varepsilon\ll 1.$ Starting from the Hopf bifurcation curve, small amplitude canard cycle develops into large amplitude canard cycle with head (in Domain 4) and further to relaxation oscillations (in Domain 5) in an exponentially narrow interval giving rise to canard explosion. In Fig.~\ref{fig:canard_super-critical} stable canard cycles without head (cyan) emerges for the parameter values taken from Domain 3 which changes to canard with head (green) as we move from Domain 3 to Domain 4 and further to relaxation oscillations as we shift to Domain 5 (red). Domain 4 is characterized by the stable canard cycles $\Gamma(s)$ of type (ii), that is canard with head.

	\begin{figure}[ht!]
		\centering
		\includegraphics[width=10cm,height=7cm]{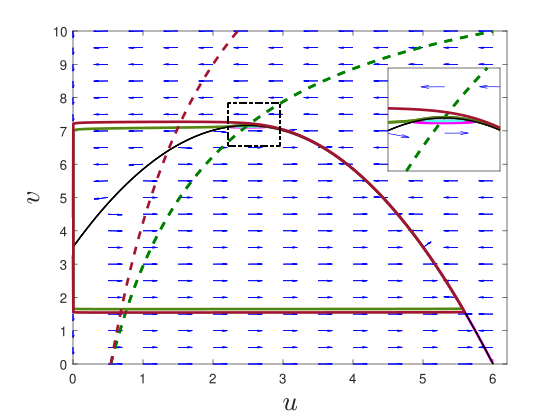}
		
		\caption{Family of canard cycles for $\nu=10,\ \chi=6,\ \alpha=1,\ \beta=2.85, \eta=1,\ \varepsilon=0.01$ for three different values of $\delta.$ Canard cycle without head for $\delta=0.14443$ (cyan), canard cycle with head for $\delta=0.14442$ (green), and relaxation oscillation cycle for $\delta=0.1444$ (red). The black solid curve represents the prey nullcline. The predator nullclines are shown by dashed curve for $\delta=0.14443$ (cyan), $\delta=0.14442$ (green), and $\delta=0.1444$ (red). The cyan and the green curve overlaps and thus cannot be distinguished in the Figure.}
		\label{fig:canard_super-critical}
	\end{figure}
	
	In Domain 7, the coexistence equilibrium point regains its stability and small unstable canard cycle without head is born which is surrounded by stable canard cycle with head. Theorem 8.4.3 of \cite{Kuehn15} states that whenever the Hopf bifurcation is sub-critical, there exists a unique parameter value where the family $\Gamma(s)$ undergoes a saddle node bifurcation of limit cycles. For the parameter values taken from Domain 7, i.e for $\nu=10,\ \chi=13,\ \alpha=1,\ \beta=2.85, \eta=1,\ \varepsilon=0.01,$ the sub-critical Hopf bifurcation threshold $\delta_H=0.109049,$ and the stable equilibrium point $E_*$ is surrounded by an unstable canard cycle without head which on the other hand is surrounded by a stable canard cycle with head (cf. Fig.~\ref{fig:canard_sub-critical}). The cycles coalesce at a saddle node bifurcation of limit cycle at $\delta_{SNC}=0.01090658,$ after which in Domain 6 the system settles down to a globally stable equilibrium point. 
    
	\begin{figure}[ht!]
		\centering
		\mbox{\subfigure[$\delta=0.1090657$]{\includegraphics[scale=0.5]{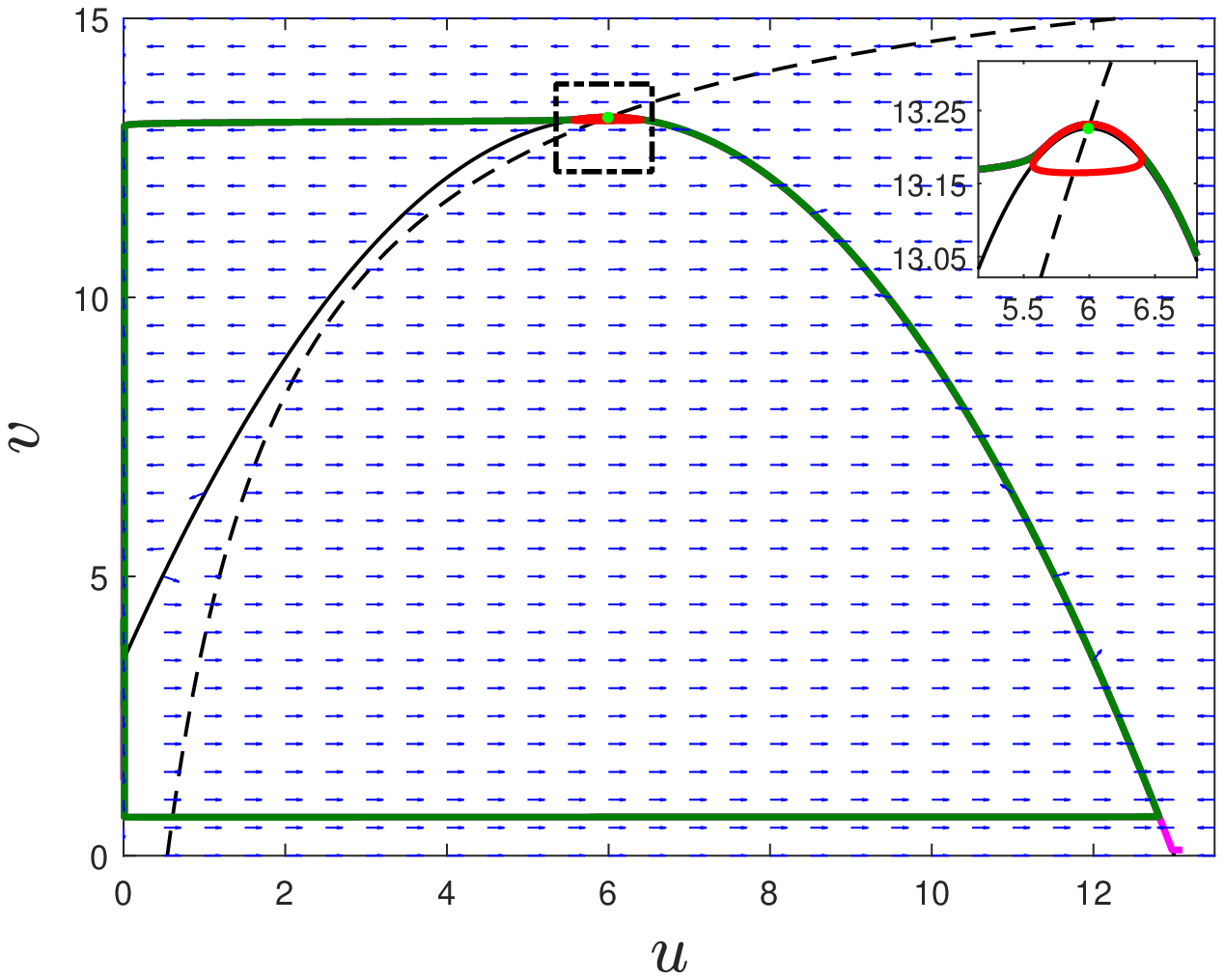}}
		\subfigure[$\delta=0.10904$]{\includegraphics[scale=0.5]{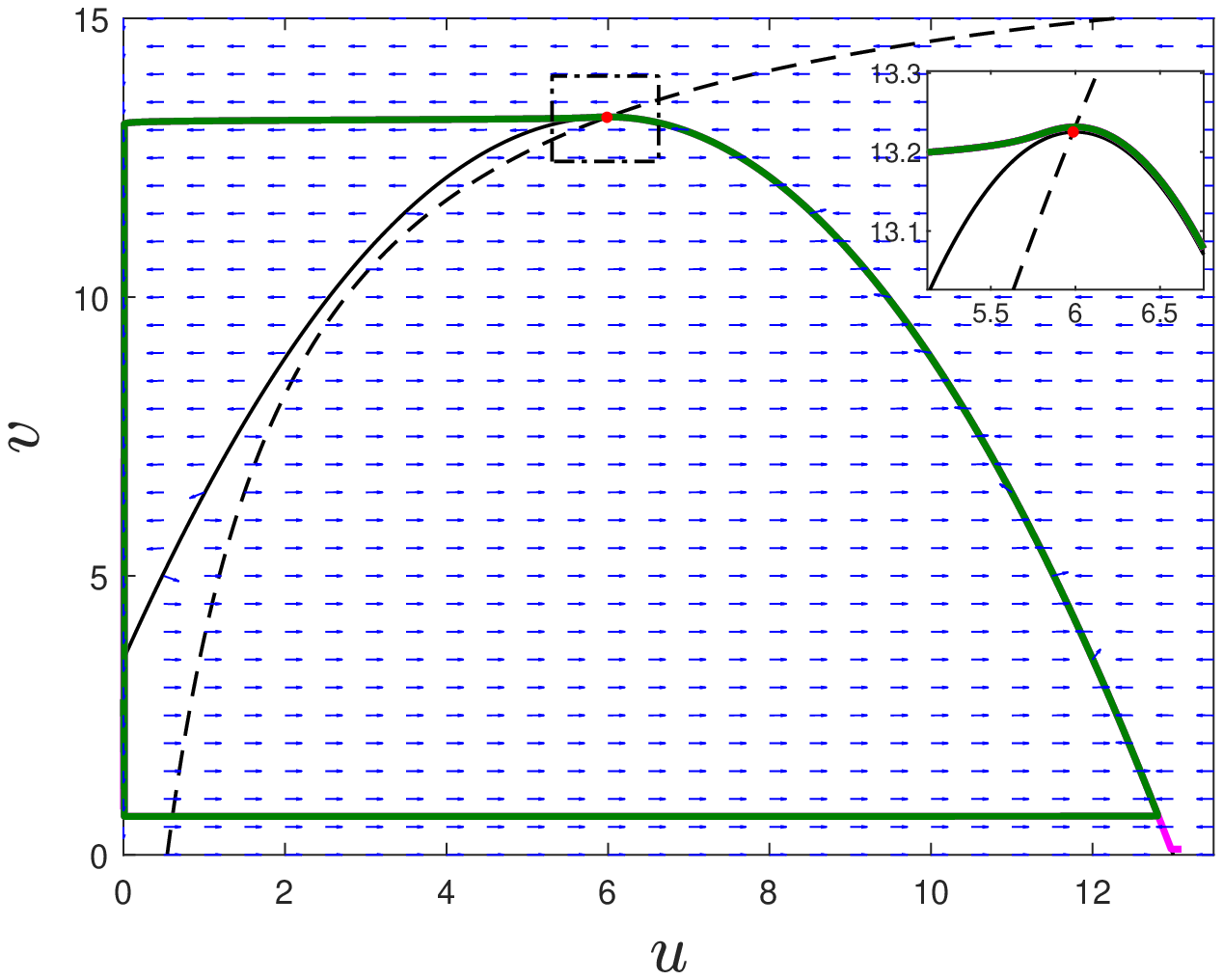}}} \caption{(a) Unstable canard cycle without head (red) and stable canard cycle with head (green) at $\delta=0.1090657,$ inset showing zoomed figure near the equilibrium, (b) Stable canard cycle with head before relaxation oscillation (green) at $\delta=0.10904,$ other parameters are fixed at $\nu=10,\ \chi=13,\ \alpha=1,\ \beta=2.85, \eta=1, \text{and \ } \varepsilon=0.01.$ Green and red dots correspond to stable and unstable equilibrium point, respectively.} \label{fig:canard_sub-critical}
	\end{figure}

The system (\ref{eq:temp_fast_sys}) therefore exhibits two different scenarios of canard explosion. One where small amplitude stable canard cycles grows to large amplitude stable relaxation oscillations in an extremely small range of $\delta$ (see Fig.~\ref{fig:canard_super-critical}). And second, where the fast transition occurs from small amplitude unstable canard cycle to large amplitude stable canard cycle and further to stable relaxation cycle (see Fig.~\ref{fig:canard_sub-critical}).
 
\section{Spatio-temporal model with slow-fast dynamics}

We now consider the corresponding spatio-temporal slow-fast model:
	\begin{equation}\label{STmodel}
	\begin{aligned}
	  \frac{\partial u}{\partial t} &= \frac{\partial ^2u}{\partial x^2} + \nu u\Big(1-\frac{u}{\chi}\Big) -\frac{\beta uv}{1+\alpha u},\\
	  \frac{\partial v}{\partial t} &= d\frac{\partial ^2v}{\partial x^2} + \varepsilon\Big(\frac{\beta u v}{1+\alpha u}- \eta v - \delta v^2\Big),
	\end{aligned}
	\end{equation}
(in dimensionless variables) over a bounded one dimensional spatial domain $\Omega=[0,L]$. Equations (\ref{STmodel}) are complemented with the initial conditions: 
	\begin{equation}\label{BC}
	    u(0,x)\,=\,u_0(x)\,\geq\,0,\,\,v(0,x)\,=\,v_0(x)\,\geq\,0,\,\ x\in \Omega,
	\end{equation}
and zero-flux boundary condition,
	\begin{equation}\label{IC}
	    u_x(t,0)\,=\,u_x(t,L)\,=\,v_x(t,0)\,=\,v_x(t,L)\,=\,0,\,\,t\,>\,0, 
	\end{equation}
corresponding to a closed ecosystem.

\begin{theorem}
    Suppose that all the dimensionless parameters involved in the reaction part are non-negative and $d>0.$ If $u_0(x)\ge 0, v_0(x) \ge 0,$ then the reaction-diffusion system (\ref{STmodel}-\ref{IC}) has a unique non-negative solution $(u(x,t),v(x,t))$ for $t>0$ and $x \in \Omega.$
\end{theorem}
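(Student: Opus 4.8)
The plan is to establish this by the classical route for semilinear parabolic systems: local existence and uniqueness via semigroup theory, non-negativity via an invariant-region argument, and global existence via uniform a priori $L^\infty$ bounds. First I would write $w=(u,v)^{\mathsf T}$ and recast (\ref{STmodel}) as the abstract semilinear problem $w_t=\mathcal{A}w+H(w)$ on the Banach space $X=C(\bar\Omega;\mathbb{R}^2)$ (or $X=L^p(\Omega;\mathbb{R}^2)$ with $p$ large), where $\mathcal{A}=\mathrm{diag}(\partial_{xx},\,d\,\partial_{xx})$ with Neumann boundary conditions generates an analytic $C_0$-semigroup and $H(u,v)=\big(f(u,v),\varepsilon g(u,v)\big)$. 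Since $1+\alpha u\ge 1$ whenever $u\ge 0$, the nonlinearity $H$ is $C^\infty$, hence locally Lipschitz, on a neighbourhood of the physically relevant region, and the standard theory of semilinear parabolic equations (Henry, Pazy) gives a unique mild solution on a maximal interval $[0,T_{\max})$, classical for $0<t<T_{\max}$, together with the blow-up alternative: either $T_{\max}=\infty$ or $\|(u,v)(t)\|_X\to\infty$ as $t\uparrow T_{\max}$.

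For non-negativity I would use the invariance of the positive quadrant. The key structural fact is that the reaction term is quasi-positive: $f(0,v)=0\ge 0$ for every $v\ge 0$ and $\varepsilon g(u,0)=0\ge 0$ for every $u\ge 0$, so on the two faces of $\mathbb{R}^2_+$ the reaction vector field points inward or is tangential. Since the diffusion matrix is diagonal and the boundary conditions are of zero-flux type, the invariant-region theorem for reaction-diffusion systems then yields $u(t,x)\ge 0$ and $v(t,x)\ge 0$ for all $(t,x)\in[0,T_{\max})\times\bar\Omega$. Equivalently, one can apply the scalar parabolic maximum principle to each equation separately, treating the other component as a (continuous) coefficient, after a harmless truncation of $H$ outside $\mathbb{R}^2_+$ that makes it globally Lipschitz.

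To pass from local to global existence it suffices to obtain uniform-in-time $L^\infty$ bounds on the solution, which rule out blow-up. With $u,v\ge 0$ the predation loss $\beta uv/(1+\alpha u)$ is non-negative, so $u$ is a subsolution of the scalar logistic equation $z_t=z_{xx}+\nu z(1-z/\chi)$ with the constant Neumann datum $z_0\equiv\|u_0\|_\infty$; the comparison principle then gives $0\le u(t,x)\le M_u:=\max\{\chi,\|u_0\|_\infty\}$. Using $u/(1+\alpha u)\le 1/\alpha$ together with $v\ge 0$, $v$ is in turn a subsolution of $z_t=d\,z_{xx}+\varepsilon\big((\tfrac{\beta}{\alpha}-\eta)z-\delta z^2\big)$, so $0\le v(t,x)\le M_v$ for an explicit constant $M_v$ depending only on the parameters and $\|v_0\|_\infty$. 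These bounds hold throughout $[0,T_{\max})$, hence the solution remains in a fixed bounded subset of $X$, the blow-up alternative excludes a finite $T_{\max}$, and the solution exists and is unique and non-negative for all $t>0$.

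The only point that requires genuine care is making the positivity argument of the second step rigorous while both components are still unknown: one must either check the (elementary) hypotheses of the invariant-region theorem --- convexity of $\mathbb{R}^2_+$ and the inward/tangential direction of $H$ on each face, both immediate here --- or carry out a simultaneous bootstrap for the pair $(u,v)$ using the globally Lipschitz modification of $H$ and then remove the modification a posteriori using the bounds of the third step. I would also note the dependence of the steps on one another: the bound on $v$ must be established after the bound on $u$, and both $L^\infty$ bounds are needed before the continuation criterion can be invoked; beyond this bookkeeping the argument is entirely routine.
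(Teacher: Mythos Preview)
Your argument is correct, but it follows a different route from the paper. The paper proceeds by the method of coupled upper and lower solutions in the sense of Pao: it observes that $(0,0)$ is a lower solution, constructs an explicit constant upper solution $(\bar u,\bar v)=(\chi,\bar v)$ with $\bar v$ the positive root of $\beta\chi\bar v-\eta\bar v-\delta\bar v^2=0$, and then invokes Pao's existence--uniqueness theorem for the resulting ordered pair. Your approach instead builds the solution in three stages (local well-posedness via analytic semigroups, invariance of $\mathbb{R}^2_+$ from quasi-positivity, and global continuation from scalar comparison bounds). What you gain is robustness: your $L^\infty$ bounds $M_u=\max\{\chi,\|u_0\|_\infty\}$ and the corresponding $M_v$ accommodate arbitrary bounded non-negative initial data, whereas the paper's upper--lower solution argument tacitly requires $u_0\le\chi$ and $v_0\le\bar v$ (otherwise $(\chi,\bar v)$ is not an upper solution for the initial--boundary value problem). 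What the paper's approach buys is brevity and an immediate, explicit invariant rectangle. One small remark on your bound for $v$: the inequality $u/(1+\alpha u)\le 1/\alpha$ needs $\alpha>0$; if you want to cover the full non-negative parameter range stated in the theorem you should instead use $\beta u/(1+\alpha u)\le \beta M_u$, which follows from the monotonicity of $s\mapsto s/(1+\alpha s)$ and your already-established bound on $u$.
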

\begin{proof}
We have already defined the nonlinear terms of (\ref{STmodel}) as $f(u,v)$ and $g(u,v).$ Both $f$ and $g$ are $C^1$ functions. Clearly, $(0,0)$ is a lower solution of the system from the definition in \cite{Pao}. Let us find $(\Bar{u},\Bar{v})$ such that $f(\Bar{u},\Bar{v})<0$ and $g(\Bar{u},\Bar{v})<0$ (the inequality can be non-strict). We can choose $\Bar{u} = \chi$, and $\Bar{v}$ a positive solution of the algebraic equation $\beta \chi \Bar{v}-\eta \Bar{v} -\delta \Bar{v}^2=0.$ Then, $$f(\Bar{u},\Bar{v}) \le 0 = \frac{\partial \Bar{u}}{\partial t} - \frac{\partial ^2\Bar{u}}{\partial x^2},\,\ \text{and}\,\, g(\Bar{u},\Bar{v}) \le 0 = \frac{\partial \Bar{v}}{\partial t} - d\frac{\partial ^2\Bar{v}}{\partial x^2},$$
the initial and boundary conditions are satisfied and $(\Bar{u},\Bar{v})$ is the upper solution of the system. Thus, Theorem 5.2 of \cite{Pao} shows that there exists a unique positive solution $(u(x,t),v(x,t))$ of the system (\ref{STmodel}-\ref{IC}) such that the following estimate hold $0 < u(x,t) < \Bar{u}, 0 < v(x,t) < \Bar{v}$ for $t\ge 0$ and $x \in \Omega.$ 
\end{proof}

\subsection{Local stability of steady state solutions}
A steady state $(u(x),v(x))$ of the system (\ref{STmodel})-(\ref{IC}) is the solution of the following system 
	\begin{equation}\label{eq:steady_state_system}
	\begin{aligned}
	   &\frac{\partial ^2u}{\partial x^2} + \nu u\Big(1-\frac{u}{\chi}\Big) -\frac{\beta uv}{1+\alpha u} =0,\\
	   &d\frac{\partial ^2v}{\partial x^2} + \varepsilon\Big(\frac{\beta u v}{1+\alpha u}- \eta v - \delta v^2\Big) =0,\\
	  & u_x(t,0)\,=\,u_x(t,L)\,=\,v_x(t,0)\,=\,v_x(t,L)\,=\,0,\,\,x \in \Omega,\, t\,>\,0.
	\end{aligned}
	\end{equation}
The feasible steady state solutions are $\Bar{E_0}=(0,0),\ \Bar{E_{\chi}}=(\chi,0)$ and $\Bar{E_*}=(u_*,v_*)$ where $(u_*,v_*)$ is the unique positive solution of $f(u_*,v_*)=0$ and $g(u_*,v_*)=0.$ We follow \cite{Camara11} to investigate the stability of the above mentioned steady state solutions. For that we linearize the system (\ref{STmodel}) around a steady state $\bar{E}$ and obtain the linearized system as follows
\begin{equation}\label{eq:spLinearized}
    \frac{\partial \textbf{W}}{\partial t} = D\Delta \textbf{W} + L(\bar{E})\textbf{W},\ \ \textbf{W}=(w_1,w_2)^T,
\end{equation}
where $$
    D=\begin{pmatrix}
    1&0\\0&d
    \end{pmatrix}
\ \text{and}\  L(u,v)=\begin{pmatrix}
\nu\Big(1-\frac{2u}{\chi}\Big)-\frac{\beta v}{(1+\alpha u)^2}&\frac{-\beta u}{1+\alpha u}\\\frac{\varepsilon \beta v}{(1+\alpha u)^2}&\varepsilon\Big(\frac{\beta u}{1+\alpha u}-\eta-2\delta v\Big)
\end{pmatrix}.$$

\begin{theorem} Let $\bar{E}_0$, $\bar{E}_{\chi}$ be the steady state solutions of system (\ref{eq:steady_state_system}) and $x\in \Omega.$ Then \\  
  (i) If $0<\varepsilon \ll 1$ and $\eta>0,\nu>0,$ then $\bar{E_0}$ is unstable.  \\
  (ii) If $\chi(\beta-\alpha)\ge \eta$ and $0<\varepsilon\ll 1,$ then $\bar{E}_{\chi}$ is unstable. Otherwise $\bar{E}_{\chi}$ is stable.
\end{theorem}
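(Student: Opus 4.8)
The plan is to reduce both assertions to an eigenvalue count for the linearised operator and then invoke the standard principle of linearised (in)stability. First I would expand a perturbation of a spatially homogeneous steady state $\bar E$ in the zero-flux (Neumann) eigenbasis $\{\cos(k_n x)\}_{n\ge 0}$ of $-\partial_{xx}$ on $\Omega=[0,L]$, with $k_n=n\pi/L$. Inserting $\mathbf W=\sum_{n\ge 0}\mathbf c_n e^{\lambda t}\cos(k_n x)$ into the linearised system (\ref{eq:spLinearized}) shows that the admissible growth rates $\lambda$ are exactly the eigenvalues of the $2\times 2$ matrices $M_n(\bar E):=L(\bar E)-k_n^2 D$, $n=0,1,2,\dots$. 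Consequently $\bar E$ is linearly unstable if some $M_n$ has an eigenvalue with positive real part, and linearly asymptotically stable if every eigenvalue of every $M_n$ has strictly negative real part. Since $-\partial_{xx}$ with zero-flux boundary conditions is diagonalised by this basis, the spectrum of the full linearised operator is $\bigcup_{n\ge0}\sigma(M_n)$, and the passage to genuine (in)stability of (\ref{STmodel})--(\ref{IC}) then follows from the classical linearised-stability theorem for semilinear parabolic systems on a bounded interval, using the invariant rectangle $0\le u\le\bar u$, $0\le v\le\bar v$ supplied by the existence result above to guarantee global existence and compactness.

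For (i), I would evaluate $L$ at $\bar E_0=(0,0)$. Since both $\partial_v f=-\beta u/(1+\alpha u)$ and $\partial_u g=\varepsilon\beta v/(1+\alpha u)^2$ vanish there, $L(0,0)=\operatorname{diag}(\nu,-\varepsilon\eta)$, hence $M_n(\bar E_0)=\operatorname{diag}(\nu-k_n^2,\,-\varepsilon\eta-dk_n^2)$ for all $n$. The mode $n=0$ already produces the eigenvalue $\lambda=\nu>0$, so $\bar E_0$ is unstable. Note that only $\nu>0$ is actually used: the smallness of $\varepsilon$ is irrelevant here, and $\eta>0$ merely fixes the sign of the (stable) second eigenvalue.

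For (ii), I would evaluate $L$ at $\bar E_\chi=(\chi,0)$. Again $\partial_u g$ vanishes because $v=0$, so $L(\chi,0)$ is upper triangular with diagonal entries $-\nu$ and $\varepsilon\big(\tfrac{\beta\chi}{1+\alpha\chi}-\eta\big)$; since $D$ is diagonal, each $M_n(\bar E_\chi)$ is still upper triangular, with eigenvalues $-\nu-k_n^2<0$ and $\mu_n:=\varepsilon\big(\tfrac{\beta\chi}{1+\alpha\chi}-\eta\big)-dk_n^2$. The sequence $(\mu_n)$ is strictly decreasing, so $\max_{n\ge0}\mu_n=\mu_0=\varepsilon\big(\tfrac{\beta\chi}{1+\alpha\chi}-\eta\big)$, whose sign is that of $\beta\chi-\eta(1+\alpha\chi)$, i.e. the condition $\chi(\beta-\alpha)\ge\eta$ of the statement. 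If $\chi(\beta-\alpha)\ge\eta$, then $M_0$ has a non-negative eigenvalue and $\bar E_\chi$ is unstable; otherwise $\mu_0<0$, hence $\mu_n<0$ for all $n$, every eigenvalue of every $M_n$ is negative, and $\bar E_\chi$ is asymptotically stable.

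The argument is computationally light precisely because both equilibria sit on the boundary of the positive quadrant, which renders the Jacobians triangular and removes any need for a Routh--Hurwitz or Turing-type discussion. The only places I would be careful are: the borderline case $\chi(\beta-\alpha)=\eta$, where $\mu_0=0$ makes $\bar E_\chi$ non-hyperbolic, so ``unstable'' must be read as ``not asymptotically stable'' (or one performs a centre-manifold reduction and inspects the quadratic terms); the rigorous identification of the operator spectrum with $\bigcup_n\sigma(M_n)$ and the transfer from linear to nonlinear (in)stability, which is standard but worth citing; and the explicit remark that $\varepsilon>0$, not its smallness, is all that the proof uses. None of these is a genuine obstacle --- the main work is the bookkeeping in the spectral-to-nonlinear step.
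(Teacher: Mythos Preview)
Your argument is correct and, at bottom, exploits the same observation as the paper: the Jacobian at each boundary equilibrium is (upper-)triangular, so the linearised spectrum decouples. The presentations differ, however. The paper works in principal-eigenvalue language (following \cite{Camara11}): it isolates a scalar subproblem, shows its top eigenvalue is positive, and then embeds the corresponding eigenfunction into the full two-component system. You instead expand directly in the Neumann cosine basis and inspect the $2\times2$ blocks $M_n=L(\bar E)-k_n^2D$ mode by mode. For constant-coefficient equilibria on an interval the two routes are equivalent; your version is more explicit and makes the role of the $n=0$ mode transparent, while the paper's phrasing would generalise more readily to spatially inhomogeneous steady states. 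Your closing remarks---that only $\varepsilon>0$ rather than $\varepsilon\ll1$ is used, and that the borderline case $\mu_0=0$ is non-hyperbolic---are accurate and in fact sharper than the paper's statement. One small slip: $\beta\chi-\eta(1+\alpha\chi)\ge0$ rearranges to $\chi(\beta-\eta\alpha)\ge\eta$, which coincides with the stated condition $\chi(\beta-\alpha)\ge\eta$ only because the paper's numerical examples fix $\eta=1$.
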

\begin{proof}
(i) The linearized system around $\bar{E_0}$ is given by
\begin{equation*}
    \begin{aligned}
      \frac{\partial w_1}{\partial t} &=\Delta w_1+\nu w_1,\\
      \frac{\partial w_2}{\partial t} &=d\Delta w_2-\varepsilon \eta w_2,\\
    \end{aligned}
\end{equation*}
with the above boundary condition. To show that $\bar{E_0}$ is unstable we need to prove that the largest eigenvalue of the following eigenvalue problem is positive
\begin{equation}\label{eq:eigenvalue_problem}
    \begin{aligned}
       \Delta w_1+\nu w_1 &= \sigma w_1,\\
        d\Delta w_2-\varepsilon \eta w_2 &= \sigma w_2,\\
    \end{aligned}
\end{equation}
with above zero-flux boundary condition. Let $\sigma_1$ be the largest eigenvalue of the above eigenvalue problem and $\lambda_p$ be the principle eigenvalue of the problem
\begin{equation*}
          \Delta w_1+\nu w_1 = \lambda w_1,\ \ x\in \Omega,\ \frac{\partial w_1}{\partial x}\Big |_{x=0,L}=0.
\end{equation*}
 Using the boundary condition we thus have $\lambda_p>0$ and the associated eigenfunction is $\Tilde{w}_1$. Now, if we take $w_2\equiv 0,$ then $(\Tilde{w}_1,0)$ satisfies the system (\ref{eq:eigenvalue_problem}) with $\sigma=\lambda_p.$ Thus, we constructed an eigenvalue $\lambda_p$ of the original system (\ref{eq:eigenvalue_problem}). Therefore we must have $\sigma_1\ge \lambda_p>0.$ Hence, $\bar{E_0}$ is unstable. \\ 

(ii) The linearized system around $\bar{E}_{\chi}$ is given as follows
\begin{equation*}
    \begin{aligned}
      \frac{\partial w_1}{\partial t} &=\Delta w_1-\nu w_1-\frac{\beta\chi}{\alpha \chi+1}w_2,\\
      \frac{\partial w_2}{\partial t} &=d\Delta w_2+\varepsilon \Big(\frac{\beta\chi}{\alpha \chi+1}-\eta\Big) w_2,\\
    \end{aligned}
\end{equation*}
with the above boundary condition. The corresponding eigenvalue problem is \begin{equation}\label{eq:eigenvalue_2}
\begin{aligned}
   \Delta w_1-\nu w_1-\frac{\beta\chi}{\alpha \chi+1}w_2 &= \sigma w_1,\\
    d\Delta w_2+\varepsilon \Big(\frac{\beta\chi}{\alpha \chi+1}-\eta\Big) w_2 &= \sigma w_2,
\end{aligned}
\end{equation}
where $x\in \Omega$ and satisfying zero-flux boundary condition. Let $\sigma_1$ be the largest eigenvalue of the above eigenvalue problem. Let $\chi(\beta-\alpha)>1$ and $\lambda_p$ be the principle eigenvalue of the problem
\begin{equation}\label{eq:eigenvalue_w2}
    d\Delta w_2+\varepsilon \Big(\frac{\beta\chi}{\alpha \chi+1}-\eta\Big) w_2 = \sigma w_2,\ \ x\in \Omega, \ \ \frac{\partial w_2}{\partial x}\Big|_{x=0,L}=0.
\end{equation}
Then by the previous argument $\lambda_p>0$ and the corresponding eigenfunction is $\Tilde{w_2}$. Also, $\Tilde{w_1}$ is the solution of the problem
\begin{equation*}
     \Delta w_1-(\nu +\sigma)w_1 = \frac{\beta\chi}{\alpha \chi+1}\Tilde {w_2},\ \ x\in \Omega , \ \frac{\partial w_1}{\partial x}\Big |_{x=0,L} =0. 
\end{equation*}
Then $(\Tilde{w_1},\Tilde{w_2})$ is the solution of eigenvalue problem (\ref{eq:eigenvalue_2}) with $\sigma=\lambda_p.$ Thus, the largest eigenvalue is positive and $\bar{E}_{\chi}$ is unstable.\\ 

Next, let $\chi(\beta-\alpha)<\eta$ and $(\Tilde{w_1},\Tilde{w_2})$ be the principle eigenfunction of the problem (\ref{eq:eigenvalue_2}) corresponding to the largest eigenvalue $\lambda_p.$ If $\Tilde{w_2}\not\equiv 0$, then $\lambda_p$ is also the eigenvalue of (\ref{eq:eigenvalue_w2}). Since $\chi(\beta-\alpha)<\eta$ we have $\lambda_p=\frac{\beta\chi}{\alpha \chi+1}-\eta<0.$ If $\Tilde{w_2} \equiv0,$ then $\Tilde{w_1} \not\equiv0$ and the largest eiqenvalue of the problem
\begin{equation*}
     \Delta w_1 -\nu w_1 = \sigma w_1,\ \ x\in\Omega, \ \ \frac{\partial w_1}{\partial x}\Big |_{x=0,L}=0,  
\end{equation*}
is $-\sigma<0.$ Hence $\bar{E}_{\chi}$ is stable.\\ \\

\end{proof}

\subsection{Turing instability}	

Let $\bar{E}_*=(u_*,v_*)$ denote the homogeneous steady-state of the system (\ref{eq:steady_state_system}). Linearizing the system in the vicinity of $\bar{E}_*$, we obtain the following system of linear PDEs. It describes the dynamics of initially small heterogeneous perturbations of $u$ and $v$ from their respective steady-states, in terms of the perturbation variables $w_1(t,x)$, $w_2(t,x)$:
\begin{equation}\label{LinSTeq}
\begin{aligned}
	  \frac{\partial w_1}{\partial t} &= \frac{\partial ^2w_1}{\partial x^2} + a_{11}w_1+a_{12}w_2,\\
	  \frac{\partial w_2}{\partial t} &= d\frac{\partial ^2w_2}{\partial x^2} + a_{21}w_1+a_{22}w_2.
	\end{aligned}
\end{equation}
We search the solution of (\ref{LinSTeq}) in the following form, 
\begin{equation}\label{eq:hetero_pertur}
    \begin{aligned}
    w_1(t,x) \, =\, \mu_1 e^{\lambda t}cos(kx),\,\,
    w_2(t,x) \, =\, \mu_2 e^{\lambda t}cos(kx),
  \end{aligned}
\end{equation}
satisfying the boundary condition (\ref{BC}) where $\mu_1,\ \mu_2\ll 1$ are two arbitrary constants, and $k$ is the wavenumber. Substituting (\ref{eq:hetero_pertur}) into the system (\ref{STmodel}) we obtain the characteristic equation 
\begin{equation}\label{cheqn}
|M-\lambda I_2|\,=\,0,
\end{equation}
where
\begin{equation}
M_k = \begin{pmatrix}
    a_{11}-k^2&a_{12}\\a_{21}&a_{22}-k^2d
    \end{pmatrix},
\end{equation}
and $a_{ij}$ are the same as in (\ref{jacobstar}).
The characteristic equation (\ref{cheqn}) can be written explicitly as follows
\begin{equation}\label{cheqn1}
   \lambda^2-((1+d)k^2-(a_{11}+a_{22}))\lambda + h(k^2)=0,
   \end{equation}
where tr$(M_k)=(1+d)k^2-(a_{11}+a_{22})$ and $\det(M_k)=h(k^2)$ and, 
\begin{equation}\label{eq:h_k}
    h(k^2) = k^4d-(a_{11}d+a_{22})k^2+a_{11}a_{22}-a_{12}a_{21}.
\end{equation}
The homogeneous steady-state is stable under small heterogeneous perturbations if both roots of the characteristic equation (\ref{cheqn1}) have negative real parts. Turing instability sets in if one root of the characteristic equation is zero at some critical wavenumber.
The critical wavenumber $k_{c}$ is obtained by solving $\frac{dh(k^2)}{d(k^2)}=0$ for $k^2$, which is given by
\begin{equation}
    k^2_{c} = \frac{1}{2d}(da_{11}+a_{22}).
\end{equation}
The Turing instability condition holds if $h(k^2)<0$ for certain feasible $k^2$ in the range of $(r^-,r^+)$ where 
$$r^-(d) = \frac{a_{11}d+a_{22}-\sqrt{(a_{11}d+a_{22})^2-4d(a_{11}a_{22}-a_{21}a_{12})}}{2d},$$
$$r^+(d) = \frac{a_{11}d+a_{22}+\sqrt{(a_{11}d+a_{22})^2-4d(a_{11}a_{22}-a_{21}a_{12})}}{2d}.$$
The critical wavenumber $k_c$ is a real number, $d>0$, feasible existence of $k_c$ demands the satisfaction of the implicit parametric restriction $da_{11}+a_{22}>0,$ and $a_{11}+a_{22}<0.$ These two condition can be simultaneously satisfied if $a_{11}$ and $a_{22}$ are of opposite sign. Equation of the Turing bifurcation curve can be obtained by substituting $k_c^2$ in $h(k^2)=0$ as follows
\begin{equation}\label{Turcurve}
   da_{11}+a_{22} \,=\,2\sqrt{d}\sqrt{a_{11}a_{22}-a_{12}a_{21}}.
\end{equation} 
The Turing bifurcation occurs if $a_{ij}$ satisfy the conditions $a_{11}a_{22}<0$ and $a_{11}a_{22}>a_{12}a_{21}$. These two conditions can be written in compact form as follows
\begin{equation}\label{stabcond}
    \textrm{Tr}(M_0)\,<\,0,\,\,\textrm{Det}(M_0)\,>\,0.
\end{equation} 
The wavenumber is given by $k=\frac{n\pi}{L}$, where $L$ is the length of the domain $[0,L]$, $n$ is a natural number. The expression for $k$ is guided by the no-flux boundary condition. Denote $\omega=\frac{\pi}{L}$, substituting $k=n\omega$ in the equation $h(k^2)=0$ and solving for $d$, we find the explicit expression for Turing bifurcation boundary corresponding to $n$-th mode,
\begin{equation}\label{eq:d_T(n)}
    \begin{aligned}
      d_T(n) \, = \, \frac{n^2\omega^2a_{22}-a_{11}a_{22}+a_{12}a_{21}}{n^2\omega^2(n^2\omega^2-a_{11})}.
    \end{aligned}
\end{equation}
The Turing bifurcation curve $\Gamma$ is given by the union of the boundaries of the curves $d_T(n)$ for all $n\ge 1$ (\cite{Mimura80}). Homogeneous steady-state $E_*$ is stable below the Turing curve $\Gamma$ and is unstable above it. Depending on the values of $n,\ \omega$, and $a_{ij},$ the value of $d_{T}(n)$ can either be positive or negative. We however, consider only the positive values of $d_T(n)$, which indicates the existence of multiple stationary heterogeneous solution for different values of $n\ (\ge 1)$. The pattern which we obtain through numerical simulation is determined by the possible values of $d_T(n)$ and the maximum value of real part $\lambda(k)$ which we will explain in details in the coming subsection.

\subsection{Turing patterns}

Here we explore the patterns produced by the model due to the Turing instability and their bifurcation through numerical examples. First we fix $\nu=10$, $\alpha=1$, $\beta=2.85$, $\eta=1$, $\varepsilon$, and $\delta$, $d$ are considered as the bifurcation parameters. We exploit the temporal dynamics of the model and consider two cases when system undergoes (a) super-critical Hopf bifurcation and (b)  sub-critical Hopf bifurcation. In both the cases we choose the parameter values such that the system has a unique coexistence state. The Turing bifurcation curve (black) along with the unstable modes are presented in Fig.~\ref{fig:self_diff_Turing}(a). We obtain infinitely many Turing bifurcation boundaries corresponding to the unstable modes in the pure Turing domain. But the question we want to address is, among all the unstable modes which $n^{th}$ mode participate in the pattern formation.

\begin{figure}[ht!]
		\centering
		\mbox{\subfigure[]{\includegraphics[scale=0.38]{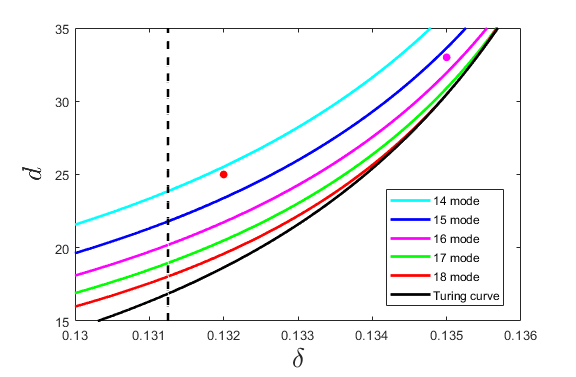}}
		\subfigure[]{\includegraphics[scale=0.34]{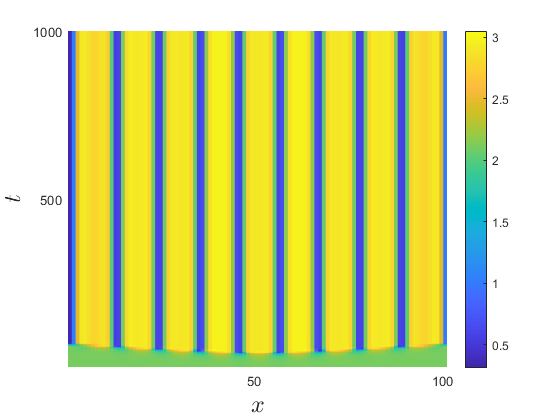}}
		\subfigure[]{\includegraphics[scale=0.34]{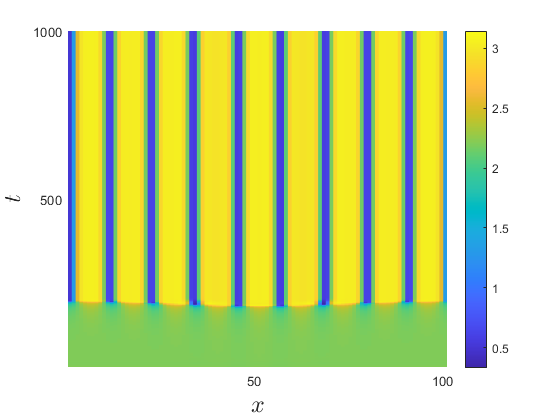}}}
		\caption{(a) The Turing instability curves for the unstable spatial modes are shown where the unstable spatial modes are marked in the legend and the dashed vertical Hopf line represents the Hopf curve. (b) The stationary Turing pattern showing approximately 9.5 peaks for parameter value in Turing domain  $(\delta,d)=(0.132,25)$, (c) stationary Turing pattern showing approximately 9 peaks for parameter value in Turing domain  $(\delta,d)=(0.135,33)$.
		Other parameter values are $\nu=10$, $\chi=6$, $\alpha=1$, $\beta=2.85$, $\eta=1$, $\varepsilon=1$.} 
	\label{fig:self_diff_Turing}
	\end{figure}	

We choose the parameter values $\delta=0.132$ and $d=25$, which lies below $d_T(14)$ but above the curves $d_T(15)$, $d_T(16)$, $d_T(17)$ and $d_T(18)$ as shown in Fig.~\ref{fig:self_diff_Turing}(a). It indicates that all the eigenmodes $n=15,16,17,18$ are unstable for the chosen parameter values. In order to understand the exact number of unstable eigenmodes and resulting stationary pattern we need to take help of the dispersion relation \cite{Murray,Manna,Mukherjee}. The largest real part of the eigenvalues $\lambda_k$ obtained from the dispersion relation is positive for $k_1^2=0.2015<k^2<0.6432=k_2^2$ and for $L=100$ we find the feasible values of $n$ within the range $14<n<25$. The most unstable eigenmode corresponds to $k^2_{max}=0.36$, $\textrm{Re}(\lambda_k)$ is maximum when $k=k_{max}$. Thus the wavelength for the mode that grows rapidly is $\frac{2\pi}{k_{max}} \approx 10.4719$. Hence the number of peaks we can expect for the stationary pattern within the spatial domain of size $L=100$ is $\frac{100}{10.4719} \approx 9.54$, which matches well with the numerical simulation result as shown in Fig.~\ref{fig:self_diff_Turing}(b). Next we consider another parameter set $\delta=0.135$, $d=33$,  for which the most unstable wavenumber is $k_{max}^2=0.3130$, and the wavelength for the most unstable mode is $\frac{2\pi}{k_{max}}=11.23$. Therefore, the number of peaks in the specified domain is $\frac{100}{11.23}\approx 8.9$ and we find the stationary pattern with nine peaks as shown in Fig.~\ref{fig:self_diff_Turing}(c). 

To understand the change of patterns due to the variation of parameter values and the involved bifurcations, we choose three different values of $\chi$ keeping $\delta=0.11$ fixed. The parameter value $\chi=3.8$ is inside the Turing instability domain but less than the supercritical Hopf-bifurcation threshold $\chi_{H_1}=3.966$. For this choice of $\chi$ we find stationary Turing pattern. Next we choose $\chi=4.5$ which belongs to Turing-Hopf domain and the resulting pattern for $d=20$ is homogeneous in space and oscillatory in time. Finally, we choose $\chi=12.25$ which is greater than the subcritical Hopf-bifurcation threshold $\chi_{H_2}=12.1$, and less than the saddle-node bifurcation threshold of limit cycles ($\chi_{SNL}=12.2522993$) for the temporal model and also inside the Turing instability domain. In this case we find stationary Turing pattern for $d=10>6.56=d_{cr}$ considering the initial condition as a small perturbation to the homogeneous steady state. However, for a large homogeneous perturbation around the steady state we find homogeneous in space and oscillatory in time solution. This is due to the presence of the unstable limit cycle, which acts as separatrix for Turing and non Turing solutions. Furthermore, due to the coexistence of homogeneous steady state and stable limit cycle in the Turing domain, these two solution branches interact near the separatrix giving rise to either the homogeneous in space and oscillatory in time solution or the regular/irregular oscillatory pattern (see, respectively, Figs.~\ref{Fig:transient_eps1} and Fig.~\ref{fig:trans_chi11dot9}(d) below). Interestingly, two different solution type can coexist in space either for a considerable time or even indefinitely (cf.~Fig.~\ref{fig:trans_chi11dot9}(h)). 

Now we consider the spatio-temporal pattern formation in the presence of slow-fast time scale where $0<\varepsilon \le 1$. In the previous section we have discussed that the temporal Hopf-bifurcation threshold alters with the variation of $\varepsilon$ and the position of Turing bifurcation curve changes while $\varepsilon$ decrease from 1. The temporal Hopf-bifurcation threshold increases and the pure Turing domain shifts upwards in the $\delta-d$ parameter space as shown in Fig.~\ref{fig:Turing_curve_eps}(a). Other parameter values are fixed at $\nu=10$, $\chi=6$, $\alpha=1$, $\beta=2.85$ and $\eta=1$. This implies that with decreasing $\varepsilon,$ the stability region  for homogeneous steady-state in the parametric space decreases and also we obtain a very narrow Turing domain even for large value of $d$. Therefore, the stationary Turing solution obtained for $\varepsilon=0.1$ loses its stability and oscillatory in time solution is obtained for sufficiently small values of $\varepsilon.$ Keeping $d=10$ and $\varepsilon=0.01$ fixed, the homogeneous in space and oscillatory in time solution of the spatio-temporal model bifurcates from the stationary steady state at $\delta=0.14444.$ The small amplitude periodic solution exists nearly upto $\delta=0.14442.$ Further decreasing $\delta,$ we observe a sharp rise in the average density of the prey species in an extremely narrow interval of $\delta$ exhibiting spatio-temporal canard explosion \cite{Avitabile17}. The bifurcation diagram explaining this phenomenon is shown in Fig.~\ref{fig:spatial_canard_exlosion}. The phase-space trajectory of the spatial average of the prey and predator density coincides with the phase-space trajectory of the non-spatial case (cf. Fig.~\ref{fig:canard_super-critical}). Further, for $\varepsilon \rightarrow 0,$ the solution of the spatially extended system resembles like relaxation oscillation. 

\begin{figure}
    \centering
    \includegraphics[scale=0.55]{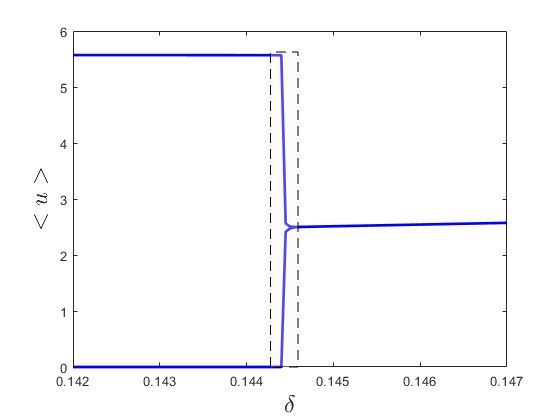}
    \caption{The plot of the spatial average of the prey density against parameter $\delta,$ exhibiting spatio-temporal canard explosion in a narrow interval for $\varepsilon=0.01$. Other parameter values are fixed at $\nu=10,\ \chi=6,\ \alpha=1,\ \beta=2.85, \eta=1,\ d=10.$}
    \label{fig:spatial_canard_exlosion}
\end{figure}

The Hopf-bifurcation curve and Turing bifurcation curves with different eigenmodes are shown in Fig.~\ref{fig:Turing_curve_eps}(b) when $\varepsilon=0.1$. The instability curves for the unstable eigenmodes $n=6,7,8,9$ are presented which form the boundary of the Turing instability domain. We can verify analytically as before, that for $\delta=0.1432$, $d=140$, (marked with magenta dot in \ref{fig:Turing_curve_eps}(b)) the rapidly growing eigenmode is 7, whereas for $\delta=0.1437$, $d=340$, (blue dot in \ref{fig:Turing_curve_eps}(b)) eigenmode 6 is the most unstable and we find stationary Turing pattern. When $\varepsilon\ll 1$, the size of the temporal unstable limit cycle shrink in size  which makes it difficult to establish stationary Turing pattern even in pure Turing domain. For parameter values very close to the co-dimension two Turing-Hopf point, the amplitude of the pattern is restricted by the small unstable cycle. 

\begin{figure}[ht!] 
		\centering
		\mbox{\subfigure[]{\includegraphics[width=7cm, height=5cm]{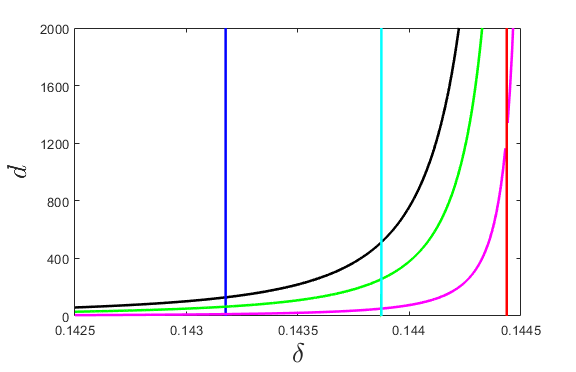}}	    \subfigure[]{\includegraphics[width=7cm, height=5cm]{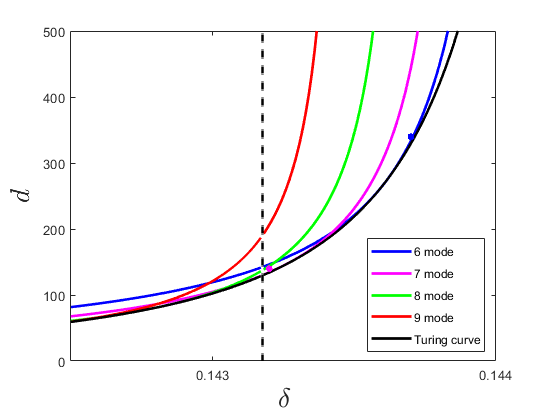}}}
		\caption{(a) The Turing (black, green, magenta) and Hopf curves (blue, orange, red) for $\varepsilon=0.1$, $\varepsilon=0.05$,  and $\varepsilon=0.01$, (b) The Turing curve (black) for $\varepsilon=0.1$ along with few unstable modes are shown. Other parameter values are $\nu=10$, $\chi=6$, $\alpha=1$, $\beta=2.85$, $\eta=1$.} \label{fig:Turing_curve_eps}
\end{figure}

\section{Transient spatio-temporal dynamics}

In this section, we study the nature of the transient dynamics observed in the spatio-temporal model for $\varepsilon\le 1$. We consider three cases where intriguing transient dynamics are obtained:

\medskip 

\noindent (a) The coexistence steady state $E_*$ is the only attractor of the temporal model;

\smallskip

\noindent (b) The temporal model exhibits bistability, that is, a stable coexistence steady state exists along with a stable limit cycle and their basin of attraction is separated by a separtrix which is an unstable limit cycle;

\smallskip

\noindent (c) The coexistence steady stable is unstable and the stable limit cycle surrounding $E_*$ is the attractor of the system.

\medskip 

We simulate our model choosing different parameter values corresponding to the cases mentioned above with spatial domain $L=200$ with suitable choices of $\Delta t$ and $\Delta x.$ We consider the following initial condition to identify different transient behaviour in the above mentioned cases 
\begin{eqnarray}\label{IC1}
u(x,0)\,=\,\left\{
\begin{array}{ll}
u_*+0.02, & |x-100|\,<\,2\\
u_*,  & |x-100|\,\geq\,2 \\
\end{array}\right.,\,\,\,\,\,v(x,0)\,=\,\left\{
\begin{array}{ll}
v_*+0.01, & |x-100|\,<\,2\\
v_*,  & |x-100|\,\geq\,2 \\
\end{array}\right.,
\end{eqnarray}

Since our main focus is on the parameter range where the system exhibits self-organized pattern formation, for a better understanding of the trajectories we introduce two auxiliary variables to quantify the degree of the population distribution's spatial heterogeneity. Namely, we consider  $$u_{ampl}(t)=u_{max}(t)-u_{min}(t)$$ and $$u_{grad}(t) = \sqrt{\int_{0}^{L} \Big(\frac{du}{dx}\Big)^2dx}$$ such that these two variables give the norm of the variable $u(x,t)$ in functional spaces $C^0$ and $L^2$ respectively. Similarly, we define these norms for the function $v(x,t)$. 
The spatial average of $u$ is denoted as $\langle u\rangle$ and is defined by $\langle u\rangle\,=\,\frac{1}{L}\int_0^Lu(y,t)dy$, with a similar definition for $\langle v\rangle$.\\

\begin{figure}[ht!] 
	\centering
	\mbox{\subfigure[]{\includegraphics[scale=0.35]{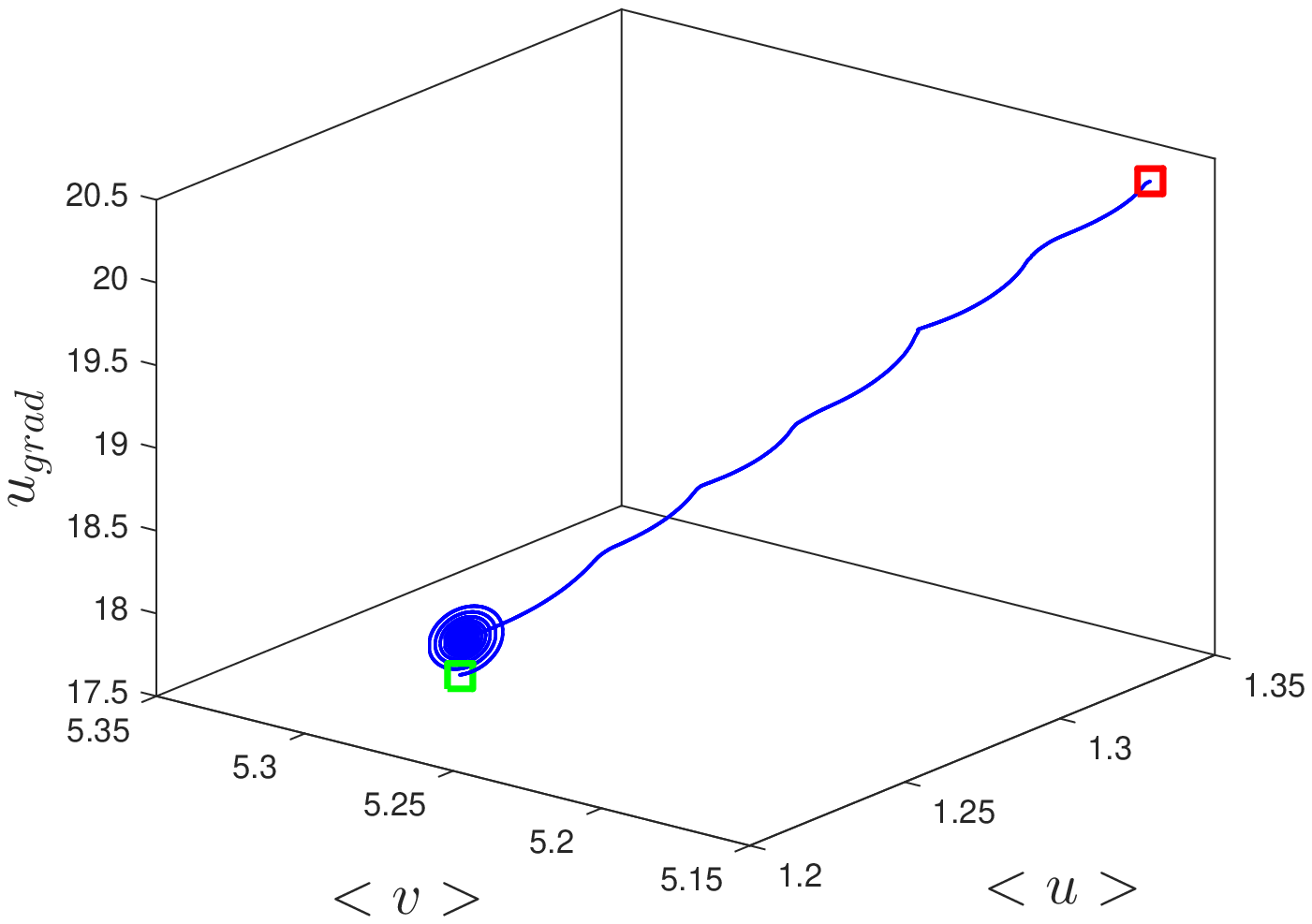}}
		\subfigure[]{\includegraphics[scale=0.35]{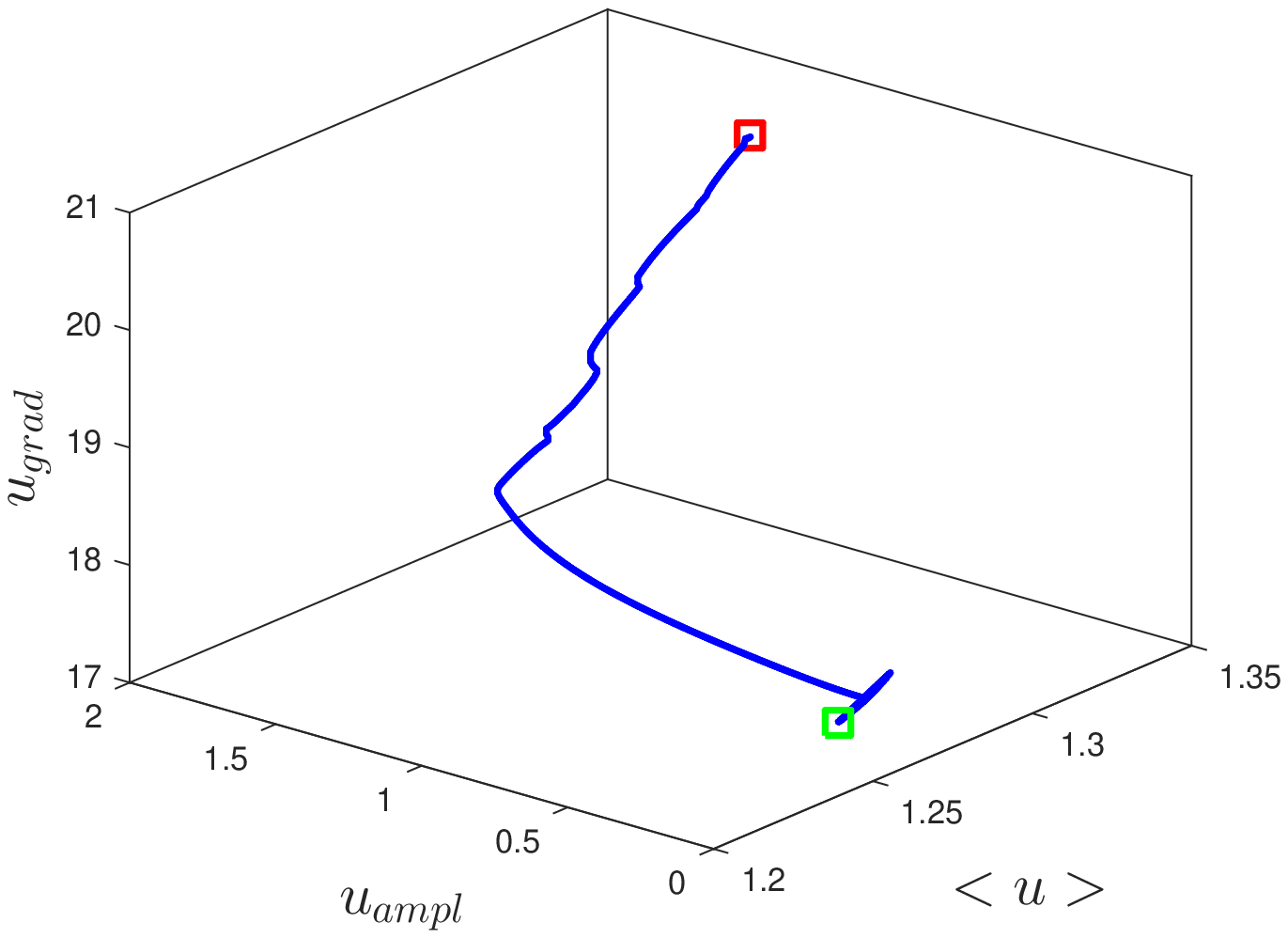}}}
		\mbox{\subfigure[]{\includegraphics[scale=0.35]{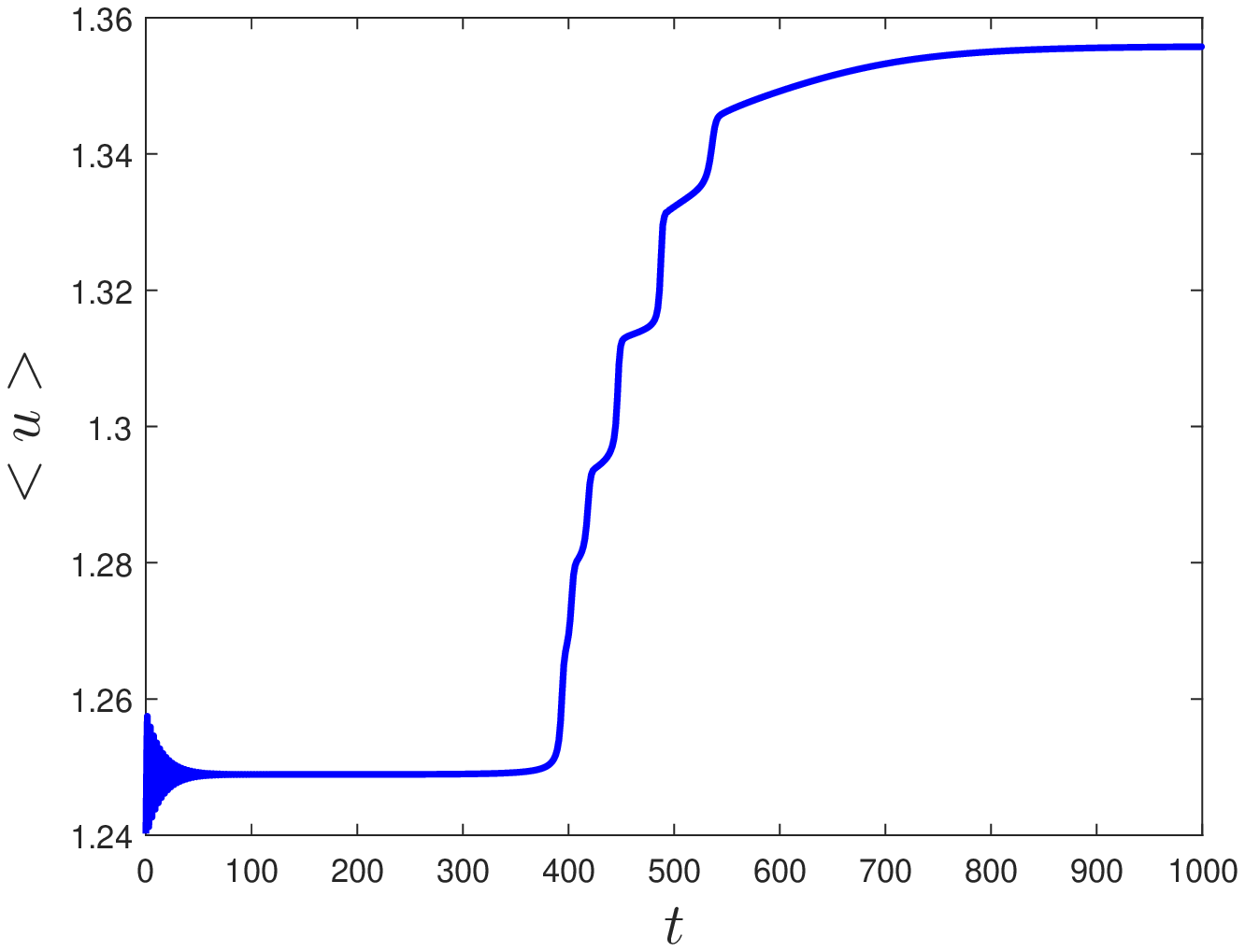}}
		\subfigure[]{\includegraphics[scale=0.35]{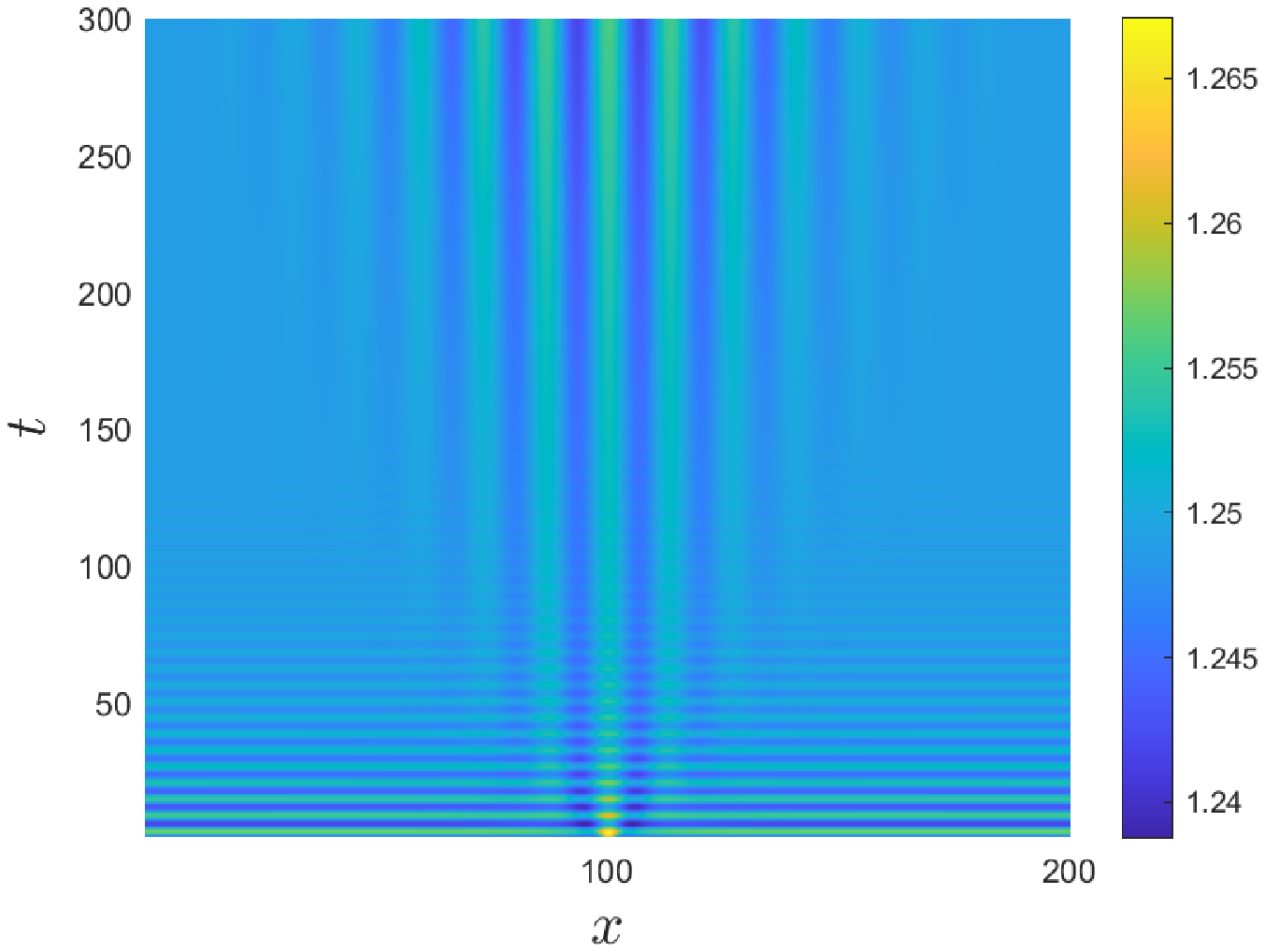}}}
	\caption{Spatio-temporal dynamics of system (\ref{STmodel}) quantified in different ways: (a) the plot of $(\langle u\rangle, \langle v \rangle,u_{grad})$, (b) the trajectory of $(\langle u \rangle,u_{ampl},u_{grad}),$ (c) the initial transient dynamics, i.e plot of $(\langle u\rangle,t)$ and (d) the transient patterns. (a,b,c,d) for $\chi=3.8,\ d=100$. Other parameters are $\nu=10$, $\alpha=1$, $\beta=2.85$, $\eta=1$, $\varepsilon=1$ and $\delta=0.11.$ Green and red square mark the initial and end points respectively.
	} \label{fig:sptemp_chi3dot8}
\end{figure}

We first study the case for $\varepsilon=1,$ and then further we will show how the transient dynamics changes with decreasing $\varepsilon$. We fix $\delta=0.11$. Then for $\chi=3.8$, the system has a unique stable steady state and the critical value of diffusion for Turing instability is $d_{cr}=94.26.$  Thus, for $\ d=100$ $(d>d_{cr})$, the parameter value lies in the pure Turing domain and the transient obtained is presented in Fig.~\ref{fig:sptemp_chi3dot8}. Fig.~\ref{fig:sptemp_chi3dot8}(a) shows the plot of $(\langle u\rangle,\langle v\rangle,u_{grad})$, Fig.~\ref{fig:sptemp_chi3dot8}(b) shows the trajectory of $(\langle u\rangle,u_{ampl},u_{grad}),$ Fig.~\ref{fig:sptemp_chi3dot8}(c) shows the variation of the spatial average of prey density with respect to time and Fig.~\ref{fig:sptemp_chi3dot8}(d) shows the corresponding transient pattern. From the time series plot, we see that after some initial oscillations, the system stays near the steady state for a considerable time before settling to a different stationary state with higher norm value. For the temporal model, at $\chi=12.25$ we observe bi-stability, that is, a stable limit cycle and a stable steady state coexists. For the spatio-temporal model, depending on the value of $d$ and the initial state of the system, the trajectory either converges to a stationary steady state after some initial oscillatory transient or to homogeneous in space and periodic in time solution. Because of the presence of an unstable limit cycle in the temporal model, the transient dynamics is chaotic. This is presented in Fig.~\ref{fig:sptemp_chi12dot25}. Note that, depending on the magnitude of parameter $d$, the nature of the oscillation remains the same (chaotic) but the duration of the transient increases to infinity when $d\rightarrow d_{cr} (\approx 6.56)$.  Therefore, whenever the background parameters of the system is close to the bifurcation threshold, it exhibits long chaotic transient. This is illustrated in Fig.~\ref{fig:transient_consolidated} for different values of $d$ ranging from $d=6.6$ to $d=15$. The rate of increase of the duration of the transient chaotic regime when $d$ approaches $d_{cr}$ is well described by a power law (which transforms into a straight line in the log-log scale), as is readily seen from the graph shown in Fig.~\ref{fig:transient_consolidated}c.

For $\chi=4$ and $d=50$, the steady state is unstable and the spatio-temporal limit cycle almost coincides with the temporal limit cycle. However, using the same initial condition, for $d=55$, the system dynamics converges to a stationary state; see Fig.~\ref{fig:trans_chi4}. Therefore, changing the value of $d$ but keeping other parameters fixed, either a stationary solution or periodic regime occurs. For $\chi=11.9,$ the coexistence equilibrium is unstable surrounded by a stable limit cycle but very near to subcritical Hopf threshold. Here the system approaches a periodic orbit after spending a significant amount of time enclosing a surface in the three dimensional space, as if there exists an attractor of the system, hence forming a ghost attractor. This gives rise to long transient. Two different types of transient dynamics obtained for $d=5,$ and $d=25$ are shown in Fig.~\ref{fig:trans_chi11dot9}.

\begin{figure}[ht!]
	\centering
		\mbox{\subfigure[]{\includegraphics[scale=0.29]{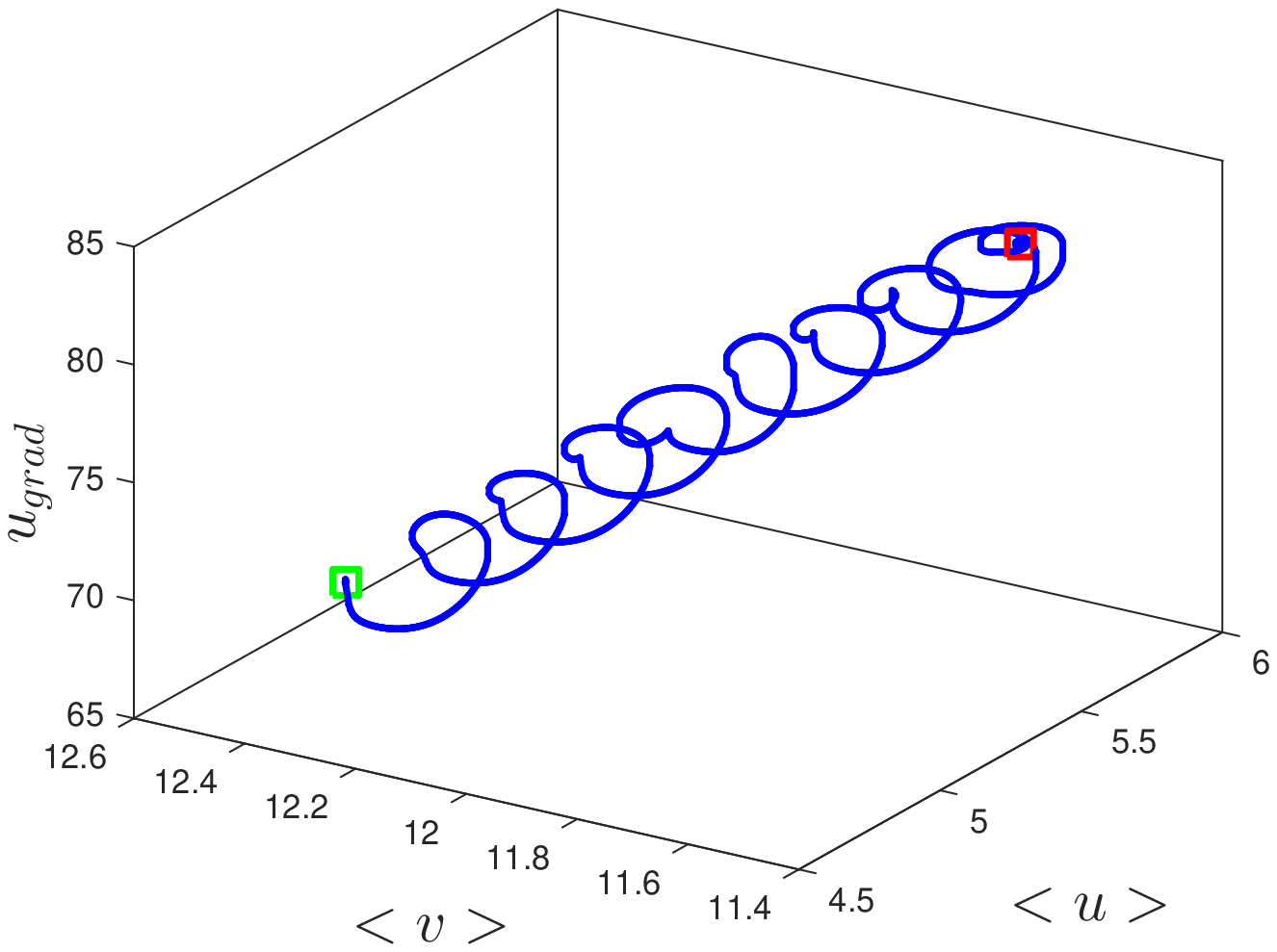}}
		\subfigure[]{\includegraphics[scale=0.29]{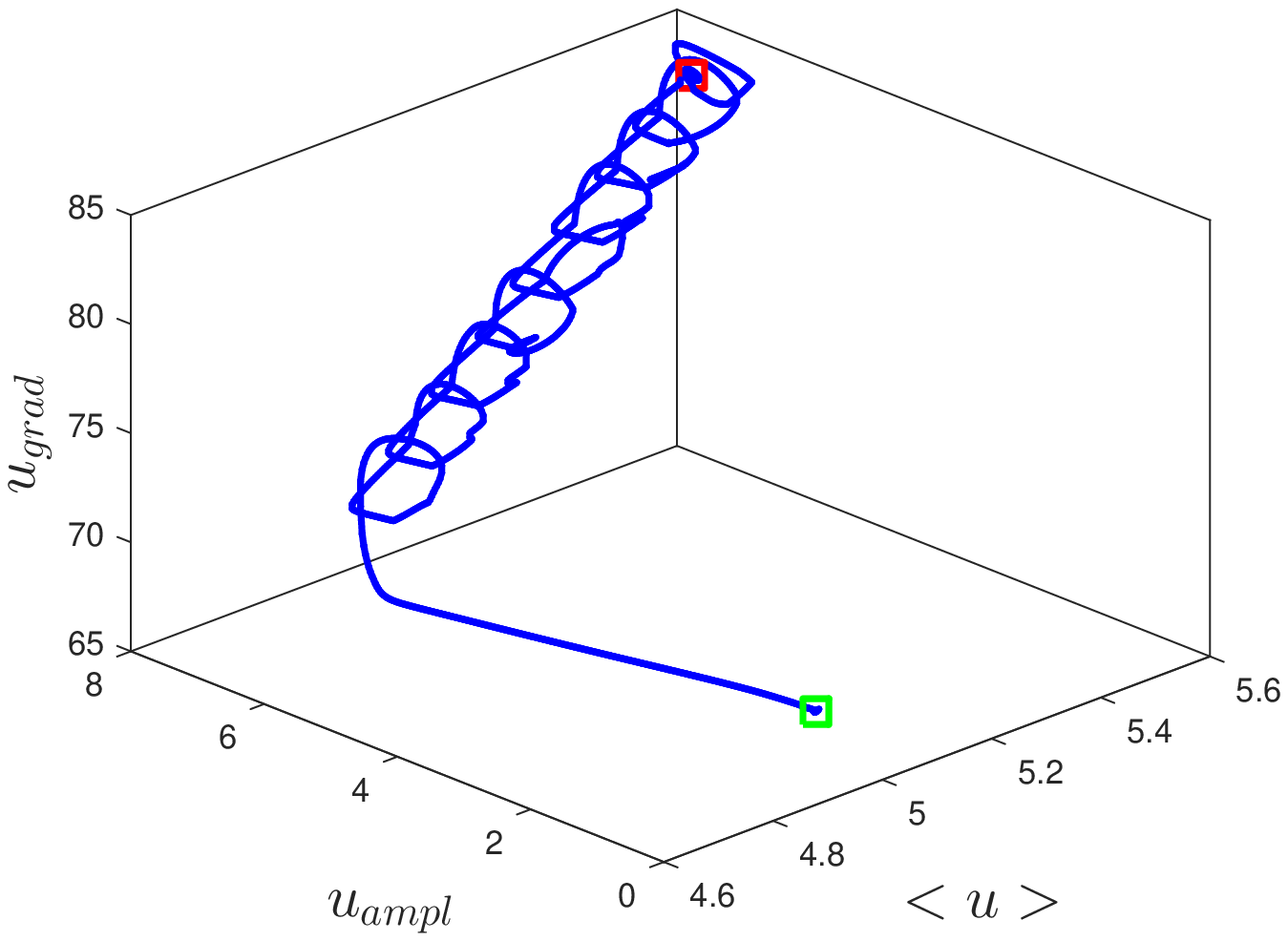}}}
	\mbox{\subfigure[]{\includegraphics[scale=0.24]{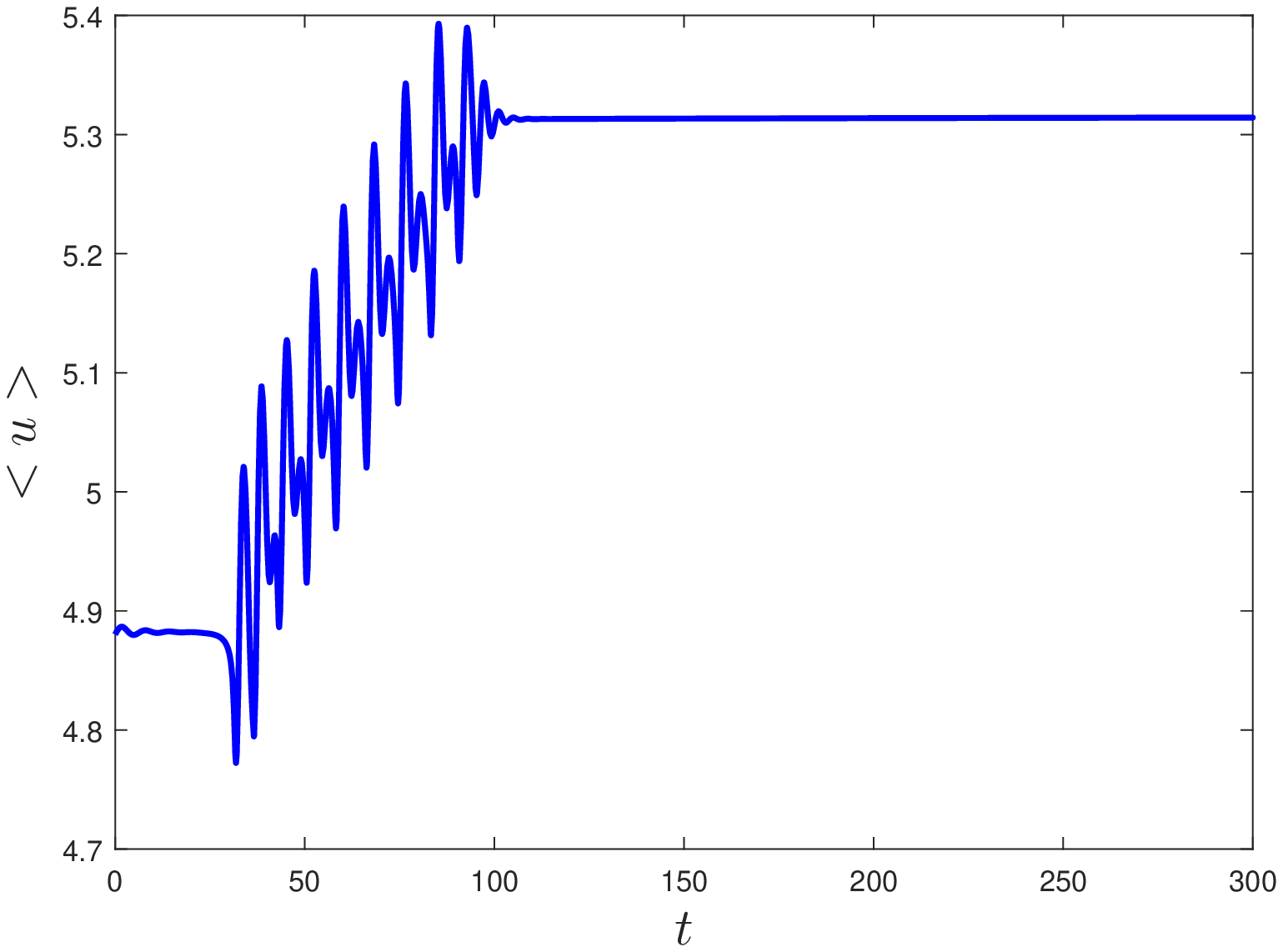}}
		\subfigure[]{\includegraphics[scale=0.28]{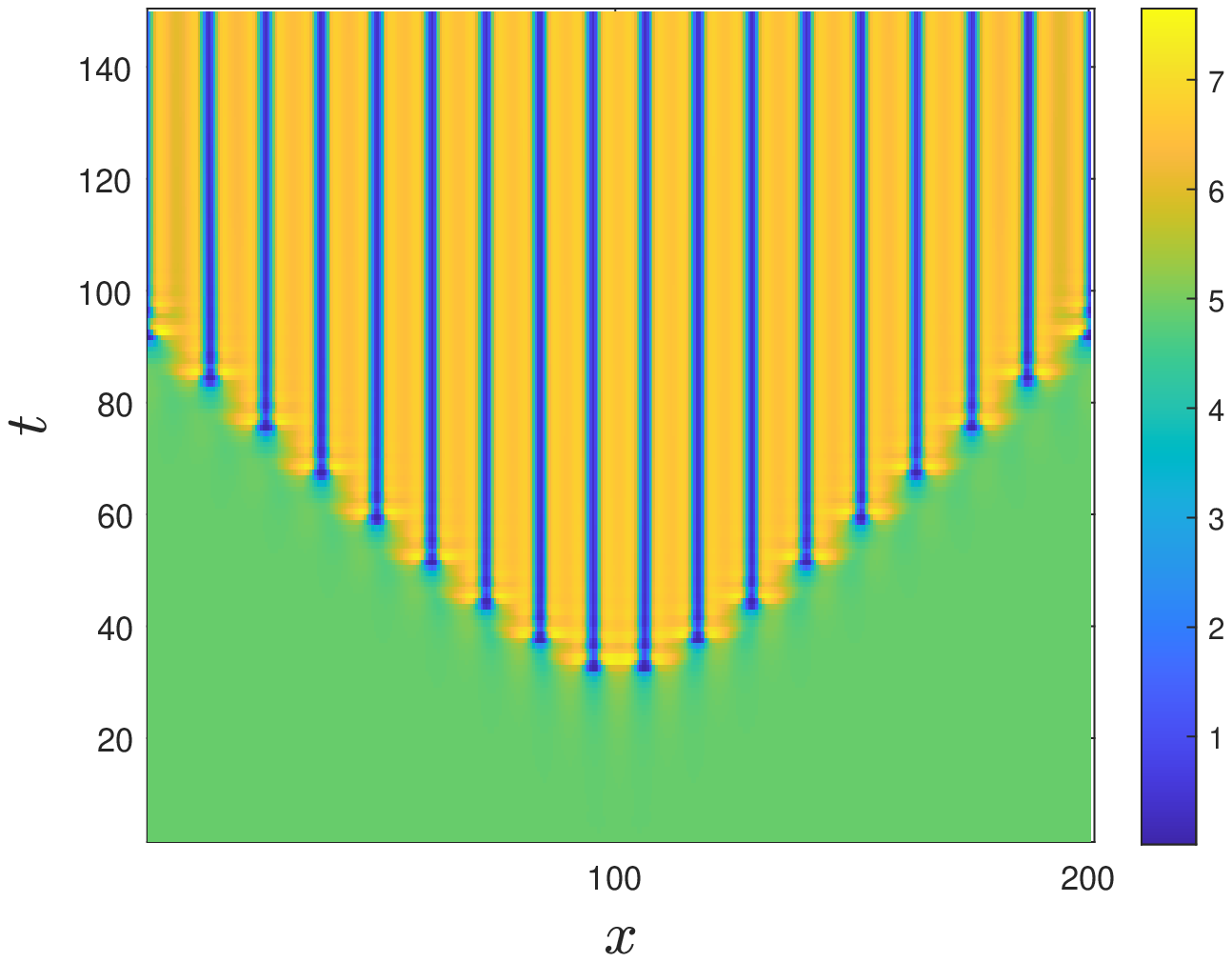}}}
	\caption{Spatio-temporal dynamics of system (\ref{STmodel}) quantified in different ways: (a) the plot of $(\langle u\rangle,\langle v\rangle,u_{grad})$, (b) the trajectory of $(\langle u\rangle,u_{ampl},u_{grad}),$ (c) the initial transient dynamics, i.e plot of $(\langle u\rangle,t)$, and (d) the transient patterns for $\chi=12.25,\ d=10$. Other parameters are $\nu=10$, $\alpha=1$, $\beta=2.85$, $\eta=1$, $\varepsilon=1$ and $\delta=0.11.$. Green and red square mark the initial and end points respectively.} \label{fig:sptemp_chi12dot25}
\end{figure}

\begin{figure}[ht!]
	\centering
		\mbox{\subfigure[]{\includegraphics[scale=0.33]{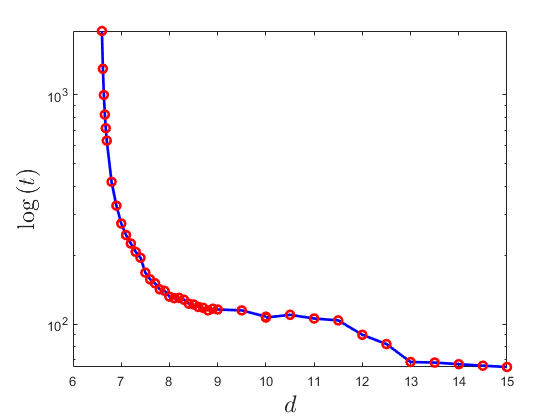}}
		\subfigure[]{\includegraphics[scale=0.33]{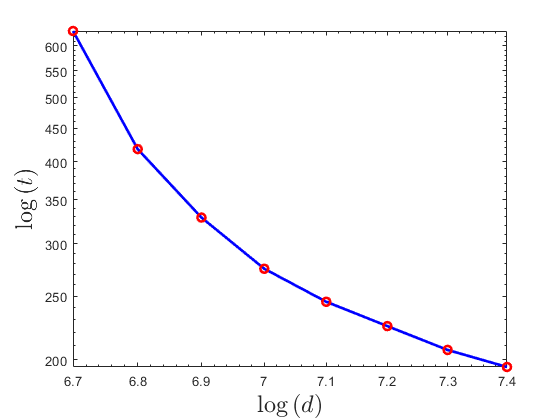}}
		\subfigure[]{\includegraphics[scale=0.33]{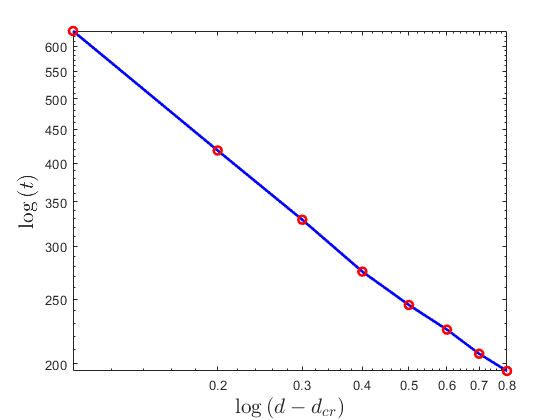}}
		}
	  \caption{(a) The duration of the transient is plotted against the value of $d \in [6.7,15].$ (b) Zoomed figure of (a) for $d \in [6.7, 7.4]$. (c) The same as in (b) but shown in log-log scale (logarithmic for both axes). The plot is very close to a straight line, which indicates that the transient duration depends on $|d-d_{cr}|$ as a power law.} \label{fig:transient_consolidated}
\end{figure}

\begin{figure}[ht!]
	\centering
	\mbox{\subfigure[]{\includegraphics[width=4cm,height=2.5cm]{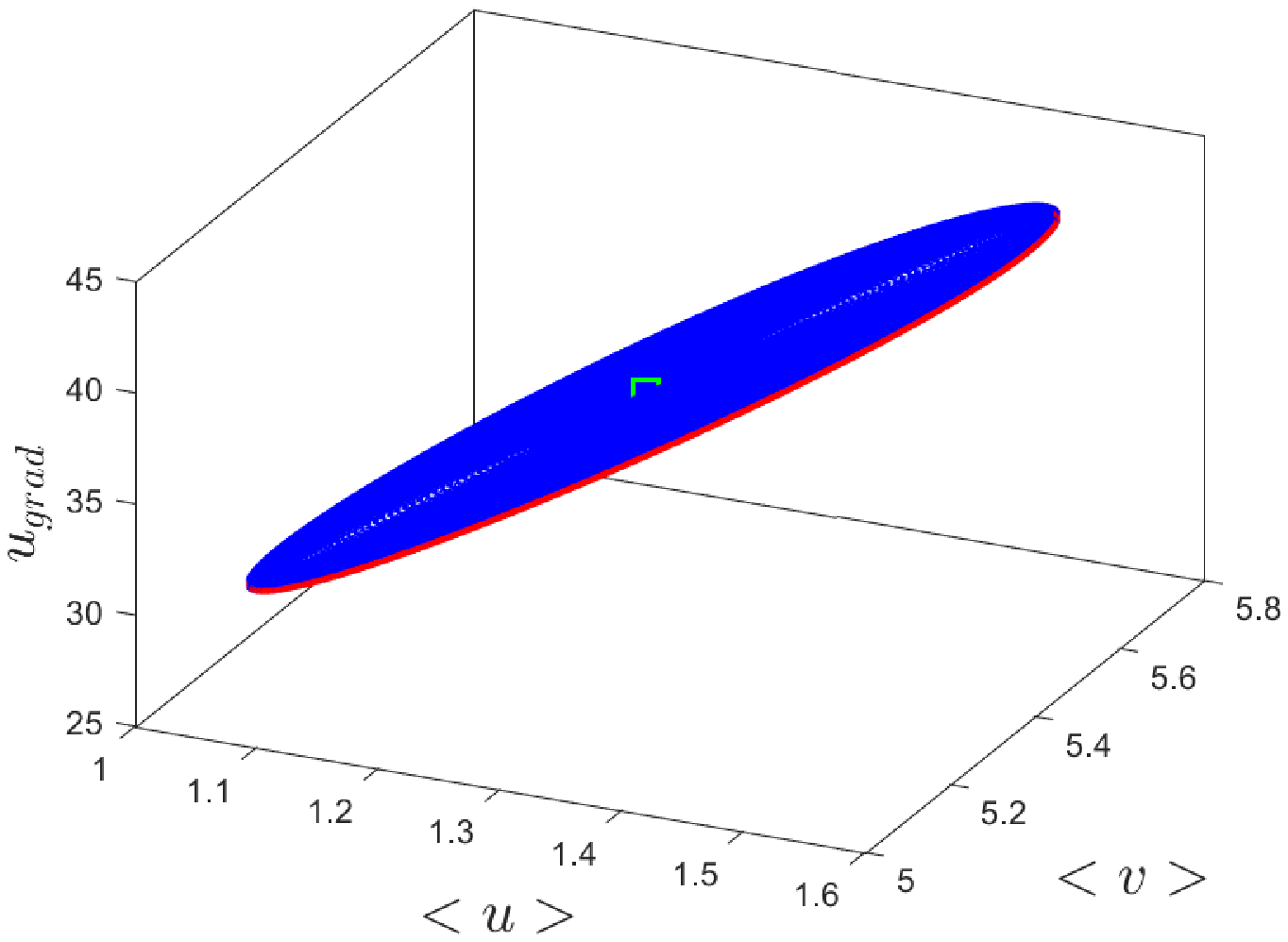}}
		\subfigure[]{\includegraphics[width=4cm,height=2.5cm]{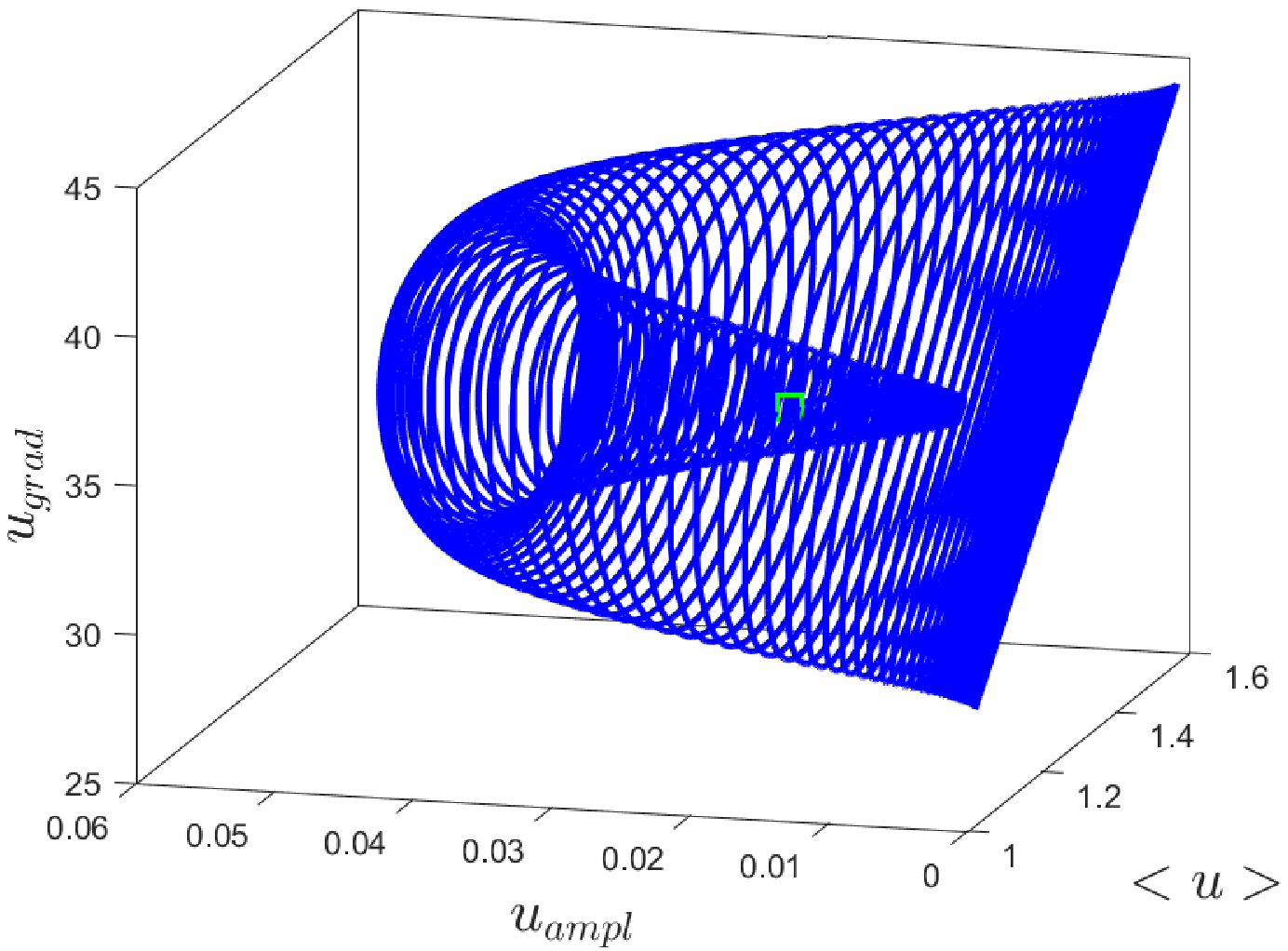}}}
		\mbox{\subfigure[]{\includegraphics[width=4cm,height=2.5cm]{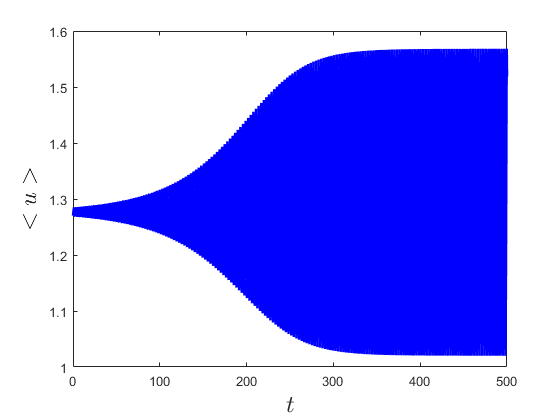}}
       \subfigure[]{\includegraphics[width=4cm,height=2.5cm]{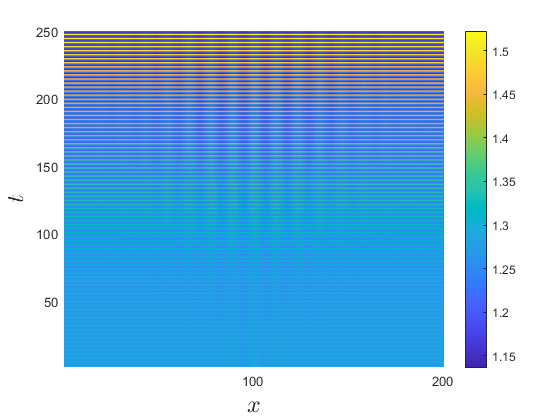}}}
	\mbox{\subfigure[]{\includegraphics[width=4cm,height=2.5cm]{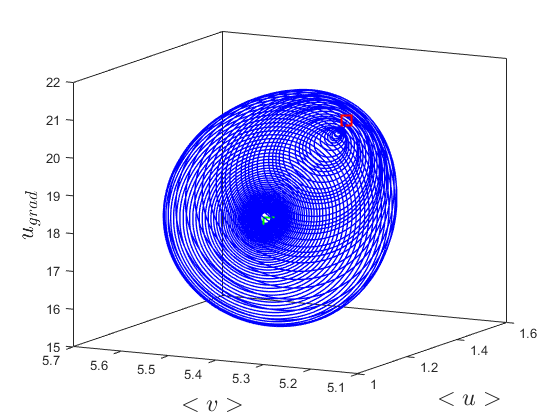}}
		\subfigure[]{\includegraphics[width=4cm,height=2.5cm]{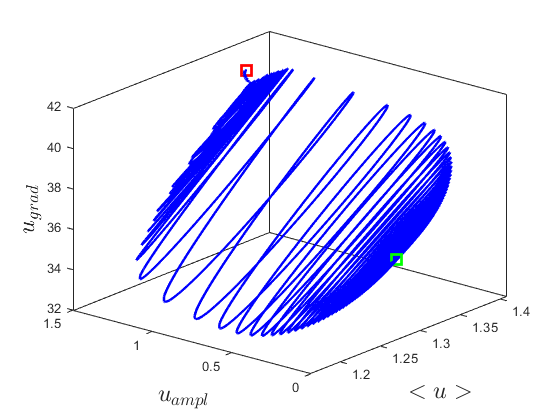}}}
		\mbox{\subfigure[]{\includegraphics[width=4cm,height=2.5cm]{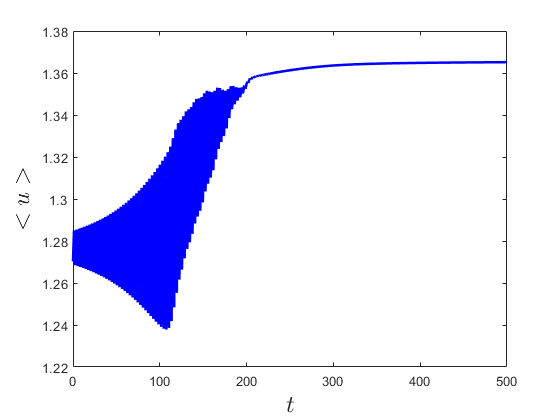}}
		\subfigure[]{\includegraphics[width=4cm,height=2.5cm]{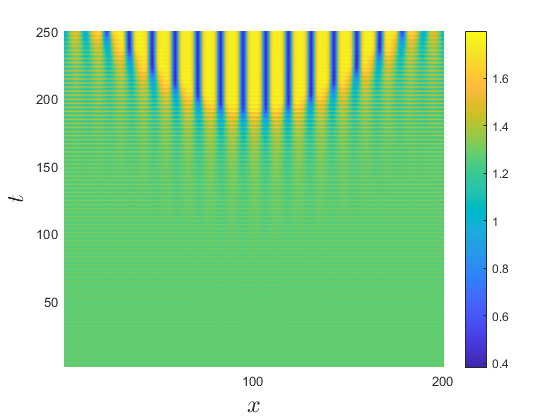}}}
		\caption{Spatio-temporal dynamics of system (\ref{STmodel}) quantified in different ways for $\chi=4,\ \delta=0.11,$ and $d=50$ (first and second panels), $d=55$ (third and fourth panels): (a) the plot of $(\langle u\rangle,\langle v\rangle,u_{grad})$, (b) the trajectory of $(\langle u\rangle,u_{ampl},u_{grad}),$ (c) the initial transient dynamics, i.e plot of $(\langle u\rangle,t)$, and (d) the transient patterns. Green and red square mark the starting and ending points respectively. Other parameter values are $\nu=10$, $\alpha=1$, $\beta=2.85$, $\eta=1$, $\varepsilon=1$.}
	\label{fig:trans_chi4}
\end{figure}

Now, while decreasing $\varepsilon$, the global attractor of the system changes from stable steady state to periodic attractor depending on the other temporal parameters. Though the initial transient time decreases considerably when limit cycle is the attractor of the system, but there exists long period of stasis and rapid oscillations.  For instance, in Fig.~\ref{Fig:transient_eps1}(a) the transient time and the irregular periodic oscillations increases with decreasing $\varepsilon$, whereas in Fig.~\ref{Fig:transient_eps1}(b) the initial transient decreases giving rise to large amplitude spatio-temporal canard like solution which is homogeneous in space and oscillatory in time, see Fig.~\ref{Fig:transient_eps1}(c). We validate this by tracking the variables $u$ and $v$ at a particular space point. From the time series analysis in Fig.~\ref{Fig:transient_eps1}(c) we observe that the slow variable $u$ takes much longer time in the transition from high density to low, which is along the critical manifold $C_0^1$. After some initial transients the slow flow of the trajectory along the critical manifold makes a fast jump from the vicinity of the fold point. We therefore conclude the following points:
\begin{itemize}
	\item If a stable equilibrium point is the only attractor of the temporal model, then with decreasing $\varepsilon$ the transient increases in the corresponding spatio-temporal model and it takes longer time to settle down to a stationary heterogeneous state (cf. Fig.~\ref{Fig:transient_eps1}(a)).
	
	\item When the system is bistable, and specifically for parameter values very near to the saddle-node bifurcation threshold of limit cycles ($\delta=0.11,\ \chi_{SNL}=12.2523$), we initially observed long chaotic transient. But for $\varepsilon<1$ the transient time and the irregularity of the transient decreases. But there exists alternate period of stasis and sudden jumps (cf. Fig.~\ref{Fig:transient_eps1}(b)).
		\item If a stable limit cycle, surrounding an unstable coexistence steady-state, is the only attractor of the temporal system, then with decreasing $\varepsilon$ the transient decreases. The number of oscillations in $t\in[0,400]$ decreases with $\varepsilon\ll 1.$ However, the time taken for one complete cycle increases with decreasing $\varepsilon.$ (cf. Fig.~\ref{Fig:transient_eps1}(c)). 
\end{itemize}

\begin{figure}[ht!]
	\centering
	\mbox{\subfigure[$d=5$]{\includegraphics[scale=0.3]{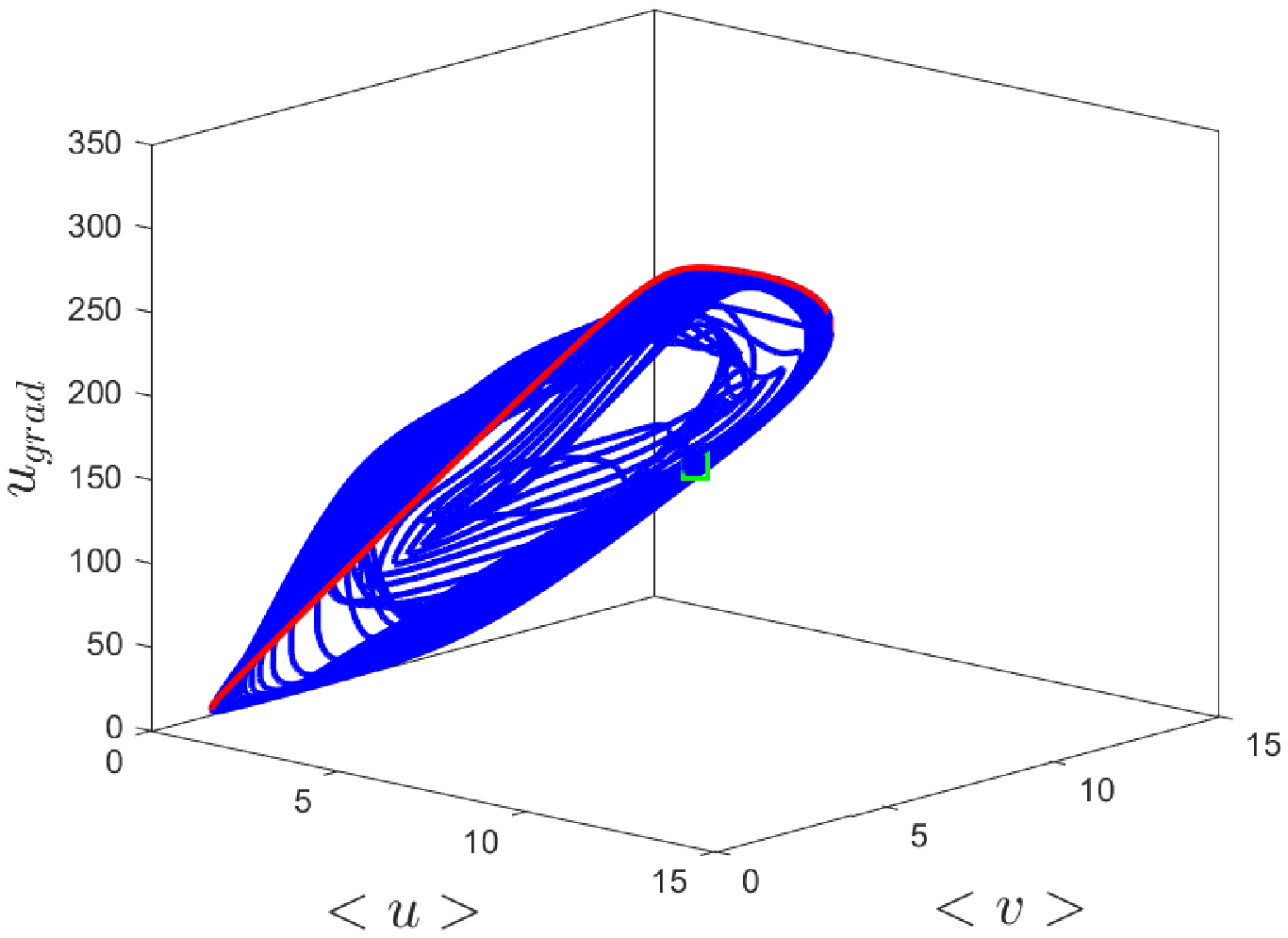}}
		\subfigure[$d=5$]{\includegraphics[scale=0.3]{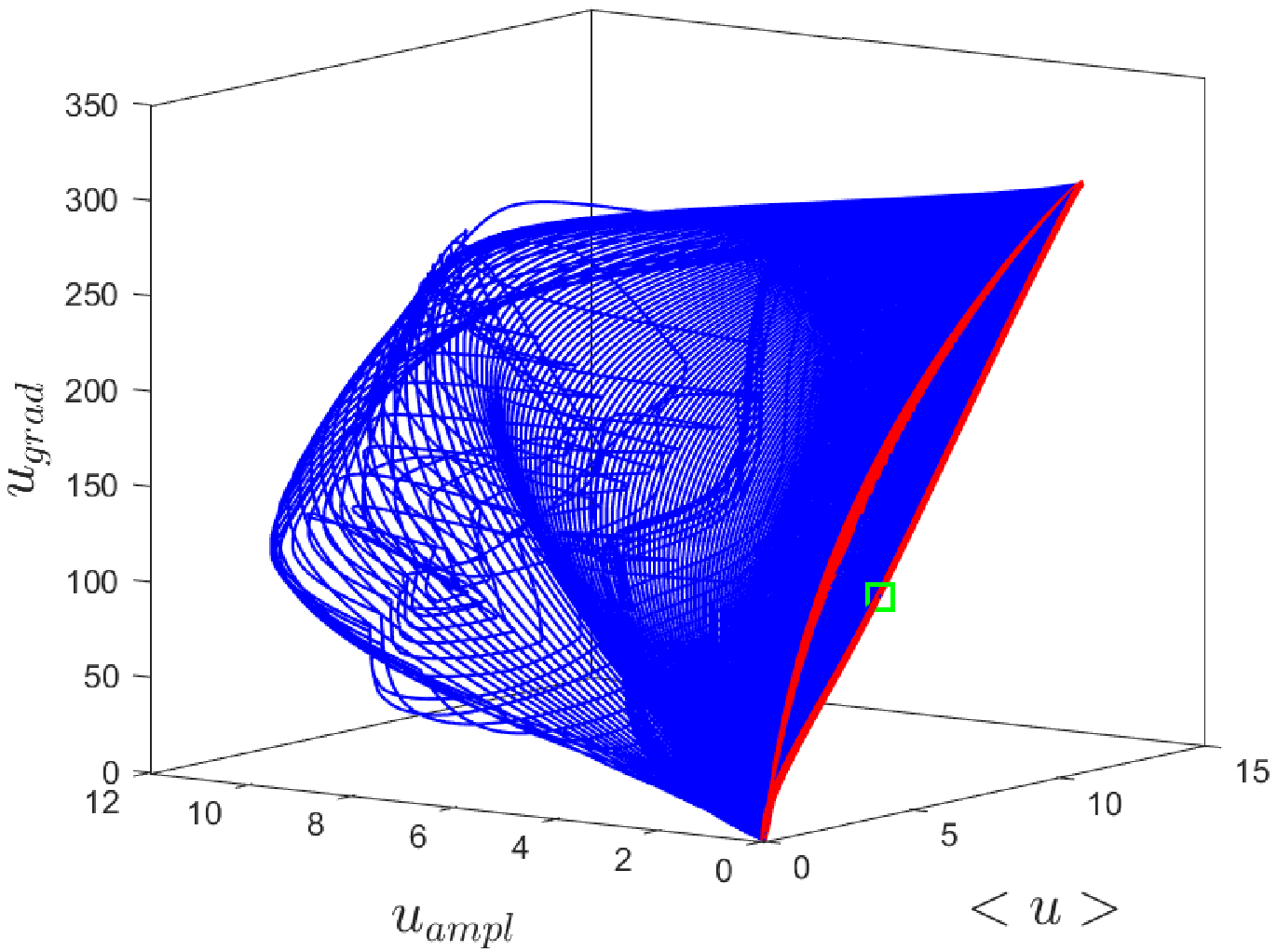}}}
		\mbox{\subfigure[$d=5$]{\includegraphics[scale=0.3]{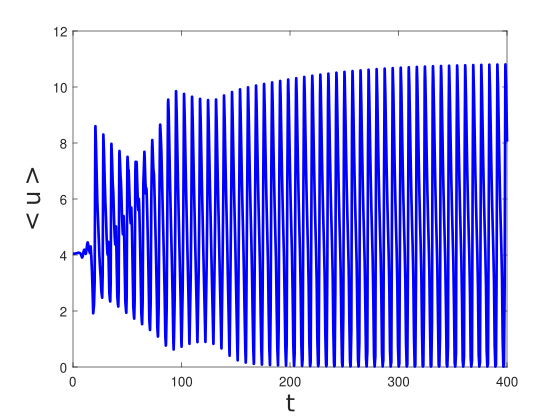}}
		\subfigure[$d=5$]{\includegraphics[scale=0.3]{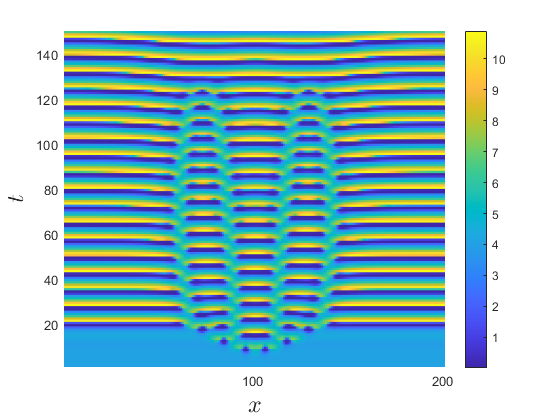}}
	}
	\mbox{\subfigure[$d=25$]{\includegraphics[scale=0.3]{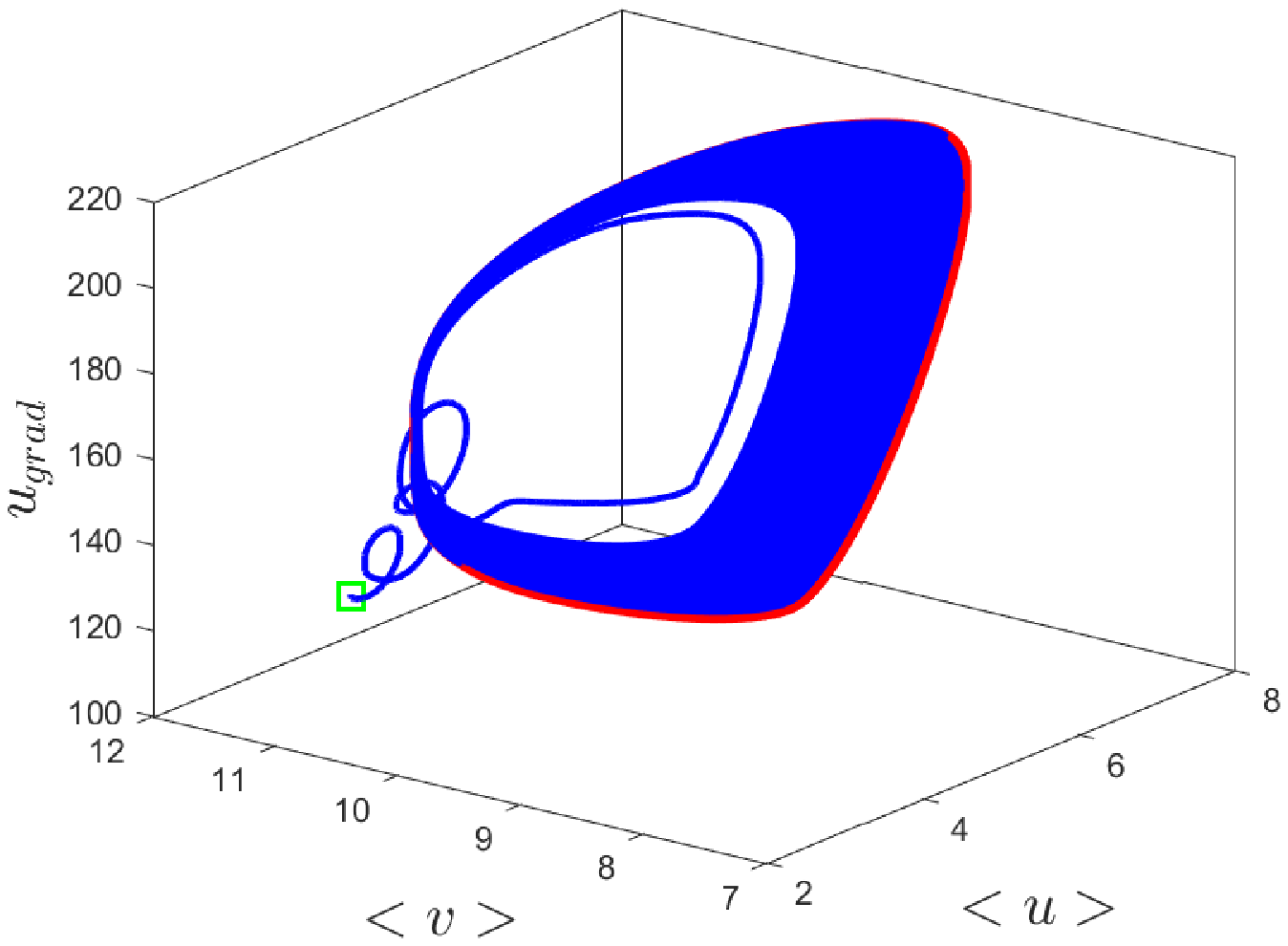}}
		\subfigure[$d=25$]{\includegraphics[scale=0.3]{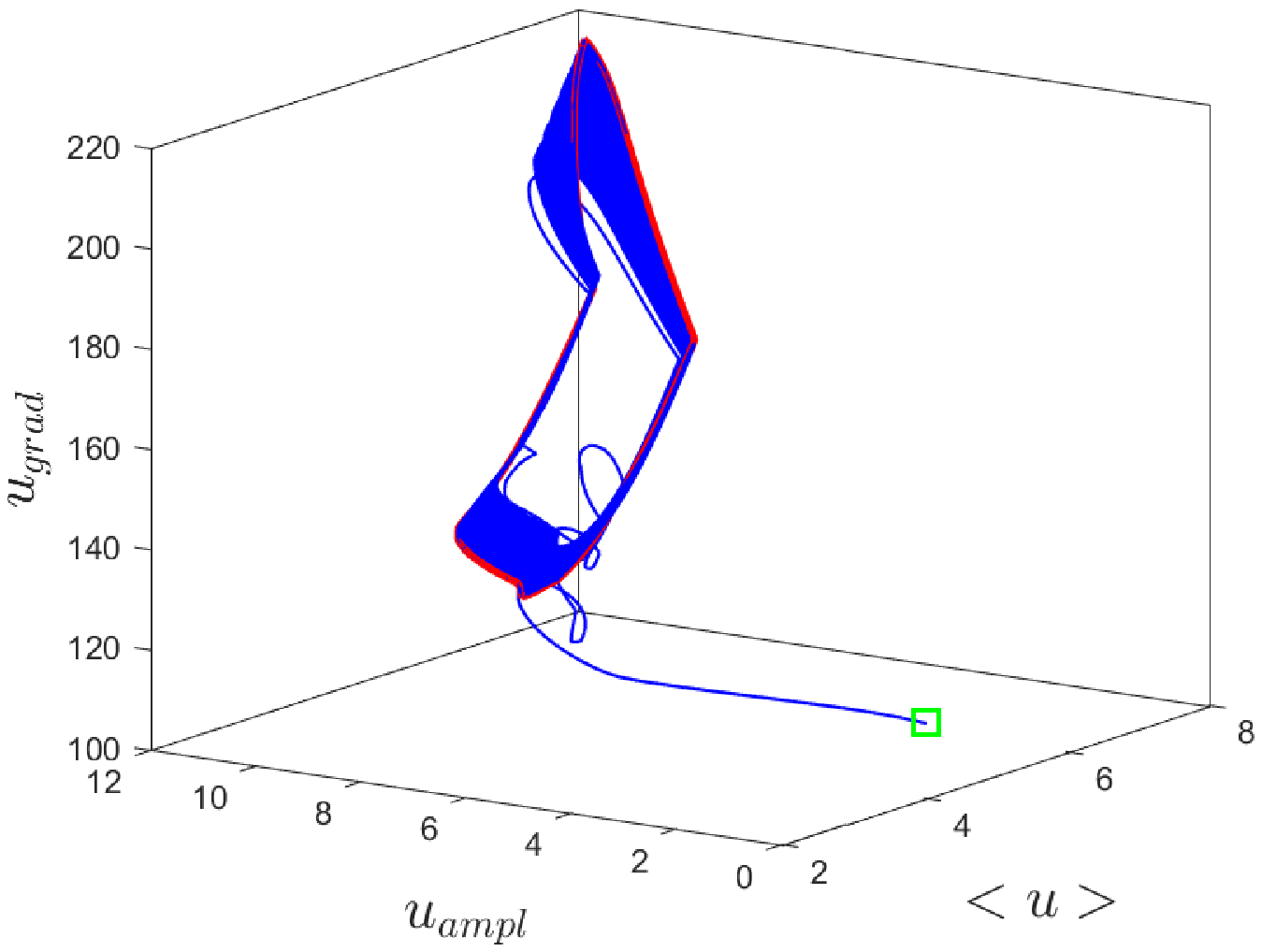}}}
		\mbox{\subfigure[$d=25$]{\includegraphics[scale=0.3]{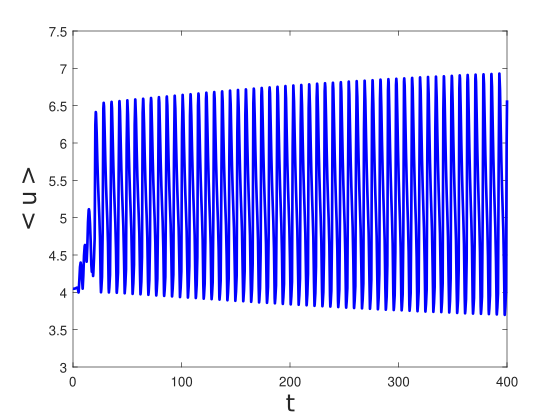}}
		\subfigure[$d=25$]{\includegraphics[scale=0.3]{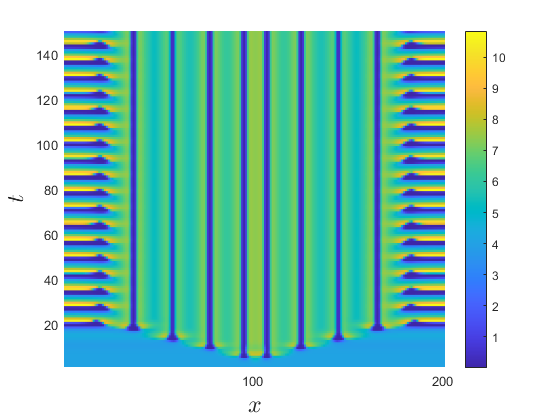}}  
	}
	\caption{Spatio-temporal dynamics of system (\ref{STmodel}) quantified in different ways for $\chi=11.9,\ \delta=0.11,$ and $d=5$ (first and second panels), $d=25$ (third and fourth panels): (a) the plot of $(\langle u\rangle,\langle v\rangle,u_{grad})$, (b) the trajectory of $(\langle u\rangle,u_{ampl},u_{grad}),$ (c) the initial transient dynamics, i.e plot of $(\langle u\rangle,t)$, and (d) the transient patterns. Green and red square mark the starting and ending points respectively. Other parameter values are $\nu=10$, $\alpha=1$, $\beta=2.85$, $\eta=1$, $\varepsilon=1$.}\label{fig:trans_chi11dot9}
\end{figure}

\begin{figure}[ht!]
	\centering
	\mbox{\subfigure[$\varepsilon=0.8$ (upper panel), $\varepsilon=0.6$ (lower panel).]{\includegraphics[scale=0.5]{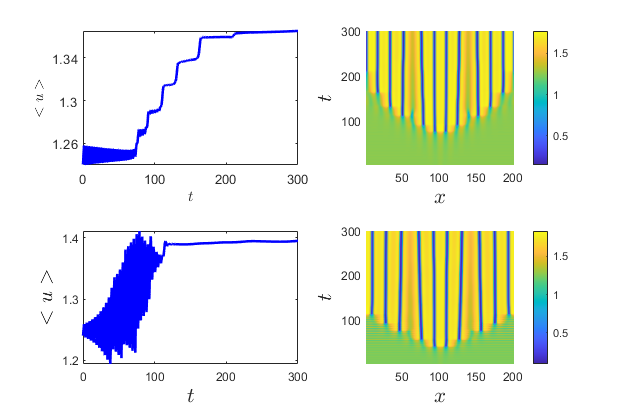}}\hspace{-2em}
	   \subfigure[$\varepsilon=0.8$ (upper panel), $\varepsilon=0.6$ (lower panel).]{\includegraphics[scale=0.5]{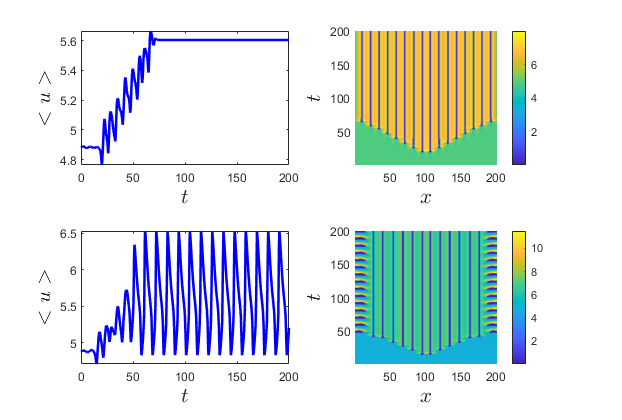}}}
   \mbox{\subfigure[$\varepsilon=0.5$ (upper panel), $\varepsilon=0.1$ (lower panel).]{\includegraphics[scale=0.5]{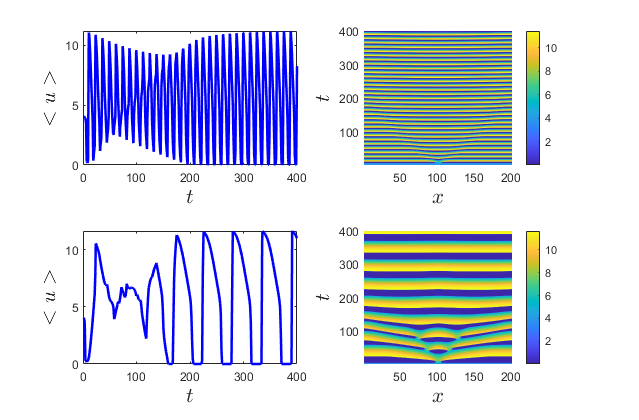}}
}\caption{Comparative study of transients for varying values of $\varepsilon.$ First column of each sub-figure shows the time series plot of spatial average prey density and the second column shows the corresponding spatial distribution of prey density for: (a) $\chi=3.8,\,\, \delta=0.11,\ d=100$, (b) $\chi=12.25,\ \delta=0.11,\ d=10,$ (c) $\chi=11.9,\ \delta=0.11,\ d=1.8$. Other parameter values are fixed as given in text.}\label{Fig:transient_eps1}

\end{figure}

\section{Conclusion}

The classical Rosenzweig-MacArthur (RM) model show two types of coexistence scenarios, a steady state attractor and oscillatory coexistence. The RM model with inclusion of intraspecific competition among the predators is known as Bazykin model \cite{Bazykin}. Introduction of intraspecific competition induces complex dynamical feature exhibited by the system which ranges from bi-stability to extinction through BT-bifurcation \cite{Huang21}. It significantly alters the basin of attraction of coexistence steady state and depending on the initial population densities the stable coexistence state or the oscillatory coexistence is observed. Coexistence of two stable attractors are marked by the unstable limit cycle which is generated through a global bifurcation namely saddle-node bifurcation of limit cycles. Unlike the RM model with slow-fast timescale, the onset of oscillatory dynamics for the Bazykin model depends explicitly on $\varepsilon.$ This dependence is shown on the $\delta-\chi$ parametric domain for different value of $\varepsilon$ in Fig. \ref{fig:bifurcation_schematic}(a). One of the objectives of this work is to reveal the critical relation between the key model parameters, namely, dimensionless carrying capacity of prey population ($\chi$), predator death rate due to intraspecific competition ($\delta$) and the timescale parameter ($\varepsilon$) responsible for a change in the system dynamics.

Most of the analysis already done in this direction are based on the problem which possesses/admits a single stable limit cycle. Hence, the onset of large amplitude oscillation due to canard explosion and existence of relaxation oscillation have received considerable attention over last few years. However, the effect of the slow-fast timescale for the systems which exhibit bi-stability remains poorly explored. Here we study the deformation of stable and unstable limit cycles due to the change in the magnitude of the slow-fast timescale parameter $\varepsilon.$ We have shown analytically the condition for the existence of canard cycles and relaxation oscillation in the considered model. This is validated with the help of numerical examples. The drastic change in the size of limit cycles through canard cycles occur in a very narrow parametric domain which is shown in a schematic bifurcation diagram (cf. \ref{fig:bifurcation_schematic}(b)). Keeping the parameters fixed and choosing $\chi$ to be the bifurcation parameter, we observe that the system exhibits two Hopf bifurcation where one is super critical and the other one is subcritical. The coexistence steady state loses its stability through supercritical Hopf and stable limit cycle appears. In an extremely small neighbourhood of the supercritical Hopf threshold, with the variation of $\varepsilon$ the small Hopf bifurcating limit cycle changes to relaxation oscillation via stable canard cycle with and without head. The coexistence steady state gains stability through subcritical Hopf bifurcation and an unstable limit cycle is formed. The unstable limit cycle changes to unstable canard cycle without head which is surrounded by stable canard cycle with head. Further varying $\varepsilon,$ the stable canard cycle with head changes to stable relaxation oscillation. Thus the slow-fast Bazykin model exhibits two different kinds of canard explosion.  The first kind of canard explosion occurs via a sequence of stable canard cycles due to supercritical Hopf. Whereas the second kind occurs in the parametric range of subcritical Hopf bifurcation and saddle-node bifurcation of limit cycles. Though the size and shape of the limit cycles gets deformed with varying $\varepsilon,$ but the stability of the limit cycles remain unaltered and were computed numerically by considering a slow-divergence integral along the critical manifold.

A spatially explicit system inherits the main properties of the corresponding non-spatial system but can also exhibit additional dynamical behaviors that substantially increase the overall system's dynamical complexity. In particular, the spatial Bazykin's system is capable to produce self-organized spatial and spatio-temporal patterns. We recall here that the classical Rozensweig-McArthur system cannot produce Turing patterns (although it can produce spatio-temporal chaos due to a different mechanism \cite{Medvinsky,Petrovskii03}); it is the intraspecific predator competition (accounted for by the quadratic mortality term) that makes it possible.

In the non-spatial Bazykin's model, the Hopf bifurcation threshold is a function of $\varepsilon$. In the corresponding spatio-temporal model, along with the Hopf, the Turing threshold also depends on $\varepsilon$ and thus the co-dimension 2 Turing-Hopf bifurcation point also shifts with the variation of $\varepsilon$. The size of the Turing domain for fixed range of parameter value shrinks in size (cf. Fig.~\ref{fig:Turing_curve_eps}) and the stationary Turing solution loses its stability forming homogeneous in space and oscillatory in time solution. Whenever the temporal model exhibits bi-stability, the existence of the stationary Turing solution depends on the choice of the initial conditions. Since the spatio-temporal model is infinite dimensional, therefore it is impossible to determine the basin of attraction of the stable homogeneous steady states, stationary Turing patterns and oscillatory in time solutions. Therefore we take the help of extensive numerical simulations to study the stationary as well as dynamic solutions. For the parameter values close to the temporal (Hopf) and spatial (Turing) instability, there is an interference of both the instabilities. This results in long transients before the solution settle down to any self-organizing pattern. The numerical detection of Turing pattern thus becomes even more challenging for parameter values pretty close to the Turing-Hopf threshold. The system may also settle down to stationary pattern after long transient, if the parameter values are chosen from Turing-Hopf domain and close to Turing-Hopf threshold. 

While the large-time (asymptotical) system properties are shaped by the Hopf and Turing bifurcations, the final pattern (or, more generally, final dynamical regime) does not appear until after the initial transients die out. Remarkably, the duration of the initial transients can be very long, in fact infinitely long when $d$ approaches its bifurcation value (cf.~Fig.~\ref{fig:transient_consolidated}). It is this property of the initial transients to become, under certain conditions, very long that makes them particularly relevant to the real-world ecological dynamics \cite{Hastings04,Hastings18}. In this paper, we have shown that the interaction between the predator intraspecific competition and the existence of multiple timescales produces a broad variety of long transients that can last for dozens or even hundreds of generations before the asymptotical pattern takes over; see Figs.~\ref{fig:sptemp_chi3dot8}--\ref{Fig:transient_eps1}.


\section*{Acknowledgements}

The work has been supported by the RUDN University Strategic Academic Leadership Program.



\end{document}